\date{\today}
\newtheorem{thm}{Theorem}[section]
\newtheorem{lemma}[thm]{Lemma}
\newtheorem{prop}[thm]{Proposition}
\newtheorem{cor}[thm]{Corollary}
\newtheorem{definition}[thm]{Definition}
\theoremstyle{remark}
\newtheorem{remark}[thm]{Remark}
\newtheorem{example}[thm]{Example}
\newtheorem{counterex}[thm]{Counterexample}
\numberwithin{equation}{section}
\newcommand{\K}{\mathbf{k}}
\newcommand{\LF}{\mathfrak{F}_{\mathbf{A},\nv} }
\newcommand{\RD}{\mathbb{R}^d}
\newcommand{\R}{\mathbb{R}}
\newcommand{\Z}{\mathbb{Z}}
\newcommand{\N}{\mathbb{N}}
\newcommand{\Rec}{\mathfrak{R}}
\newcommand{\drecl}{\mathcal{R}_{\mathbf{A}}(\pl)}
\newcommand{\drec}{\mathcal{R}_{\mathbf{A}}(\mu)}
\newcommand{\smch}{\mathcal{J}}
\newcommand{\tinv}{\Xi^{\nu} }
\newcommand{\sinv}{\Phi_{\nu}^{\cv}}
\newcommand{\PM}{\mathcal{P}}
\newcommand{\PR}{\mathbb{P}}
\newcommand{\PN}{\mathbb{P}_{\nu}}
\newcommand{\PNX}{\mathbb{P}^{x}_{\nu}}
\newcommand{\PNB}{\mathbb{P}_{\nu}^{xy}}
\newcommand{\BM}{\mathcal{B}}
\newcommand{\pl}{\mathfrak{p}_{\lambda}}
\newcommand{\om}{\omega}
\newcommand{\QR}{\mathbb{Q}}
\newcommand{\A}{\mathcal{A}}
\newcommand{\C}{\ker_\Z(\mathbf{A})}
\newcommand{\PO}{\mathcal{P}(\Omega)}
\newcommand{\GCC}{G_{\mathbf{c}}}
\newcommand{\GC}{G_{\vv}}
\newcommand{\GT}{\exp \Bigl( \sum_{j=1}^{A}\int_{0}^{1} \log \Xi^{\nu}(j,t,u(j,t)) \ \dot{u}(j,t) dN_t^j \Bigr) }
\newcommand{\nv}{\mathbf{n}}
\newcommand{\cv}{\mathbf{c}}
\newcommand{\vv}{\mathbf{v}}
\newcommand{\CS}{\Gamma}
\newcommand{\zv}{\mathbf{z}}
\newcommand{\mv}{\mathbf{m}}
\newcommand{\NO}{\mathbf{N}_1}
\newcommand{\cy}{\gamma}
\newcommand{\trc}{\Upsilon_{\varepsilon}}
\newcommand{\CST}{\ker^{*}_{\Z}(\mathbf{A})}
\newcommand\ps{\textrm{-a.s.}}
\newcommand\OO{\Omega}
\newcommand{\s}{\mathcal{S}}
\newcommand{\U}{\mathcal{U}}
\newcommand{\shv}{\theta_{\vv}}
\newcommand{\shc}{\theta_{\cv}}
\newcommand{\nmin}{n^-}
\newcommand{\npiu}{n^+}
\begin{document}

\title{Reciprocal class of jump processes}

\author{Giovanni Conforti}
\address{Institut f\"ur Mathematik der Universit\"at Potsdam. Am Neuen Palais 10. 14469 Potsdam, Germany}
\email{giovanniconfort@gmail.com}

\author{Paolo Dai Pra}
\address{Universit\'{a} degli Studi di Padova, Dipartimento di Matematica Pura e
Applicata. Via Trieste 63, 35121 Padova, Italy}
\email{daipra@math.unipd.it}

\author{Sylvie R\oe lly}
\address{Institut f\"ur Mathematik der Universit\"at Potsdam. Am Neuen Palais 10. 14469 Potsdam, Germany}
\email{roelly@math.uni-potsdam.de}

%
%
%
%

\begin{abstract} 

Processes having the same bridges as a given reference Markov process constitute its {\it reciprocal class}. In this paper we study the reciprocal class of 
compound Poisson processes whose jumps belong to a finite set $\mathcal{A} \subset \mathbb{R}^d$. We propose a characterization of the reciprocal class as  the unique set of
probability measures on which a family of time and space transformations induces the same density, expressed in terms of the \textit{reciprocal invariants}. 
The geometry of $\mathcal{A}$ plays a crucial role in the design of the  transformations, and we use tools from discrete 
geometry to obtain an optimal characterization.
We  deduce explicit conditions for two Markov jump processes to belong to the same class. Finally, we provide a natural interpretation of the invariants 
as short-time asymptotics for the probability that the reference process makes a cycle around its current state.
\end{abstract}

\subjclass[2010]{60G55, 60H07, 60J75}
\keywords{ Reciprocal processes, stochastic bridges, jump processes, compound Poisson processes
}
\maketitle

\maketitle
\tableofcontents

\section*{Introduction}

For a given $\RD$-valued  stochastic process $X = (X_t)_{t \in [0,1]}$ and $I \subseteq [0,1]$ we let   $\mathcal{F}_I$ be the $\sigma$-field generated by the 
random variables $(X_s: s \in I)$. We say that $X$ is a {\em reciprocal process} if for every $0 \leq s < t \leq 1$ the $\sigma$-fields $\mathcal{F}_{[0,s] \cup
[t,1]}$ and $\mathcal{F}_{[s,t]}$ are independent {\em conditionally} to $\mathcal{F}_{\{s,t\}}$. Comparing this notion to that of Markov process
($\mathcal{F}_{[0,t)}$ and $\mathcal{F}_{(t,1]}$ independent conditionally to $\mathcal{F}_{\{t\}}$) it is not hard to show that every Markov process is
reciprocal, but non-Markov reciprocal processes exist (see e.g. \cite{Roell13}). 

The notion of reciprocal process is very natural in many respects. On one hand it emerges when one solves dynamic problem such as 
stochastic control problems or stochastic differential equations with boundary constraints, i.e. constraints on the joint distribution at the boundary times
$t=0$ and $t=1$; this point of view has actually inspired the whole theory of reciprocal processes, that originated from ideas in \cite{Schr} and \cite{Bern32}
and led to several developments (see e.g. \cite{Zam86, Wak89, DP91}).
On the other hand it is a special case of the more general notion of {\em Markov random field} (\cite{Cre93}), 
which provides a Markov property for families of random variables $(X_r)$ indexed by $r \in \R^d$.

The systematic study of reciprocal processes has initiated with the Gaussian case: 
covariance functions giving rise to reciprocal Gaussian processes have been thoroughly studied and characterized (\cite{Jam70, Jam74, Jam75, Cha72,
CMT92,Lev97}). A more ambitious aim has been that of describing reciprocal processes in terms of {\em infinitesimal characteristics}, playing the role that
infinitesimal generators play for Markov processes; this has led to the introduction of a second order stochastic calculus
(\cite{Kre88, Th93, Kre97}).

In this paper we consider a related problem.
Suppose we are given a {\em reference} Markov process, whose law on its path space will be denoted by $\PR$. For simplicity, we
assume $X$ to be the canonical process on its path space. A probability $\QR$ is said to belong to the {\em reciprocal class} of $\PR$ if for every $0 \leq s <
t \leq 1$ and $A \in \mathcal{F}_{[s,t]}$ we have
\begin{equation} \label{ref-class}
\QR(A|\mathcal{F}_{[0,s] \cup [t,1]}) = \PR(A|\mathcal{F}_{[0,s] \cup [t,1]}) = \PR(A|X_s,X_t),
\end{equation}
where the last equality is an immediate consequence of the fact that $X$, being Markov, is a reciprocal process under $\PR$. 
In particular, $X$ is a reciprocal process also under $\QR$. The elements of the reciprocal class of $\PR$ are easy to characterize from a measure-theoretic
point of view. Denote by $\PR^{xy}$  the {\em bridge} of $\PR$ from $x$ to $y$, i.e. the probability obtained by conditioning $\PR$ on $\{(X_0,X_1)=(x,y)\}$; a
probability
$\QR$ is in the reciprocal class of $\PR$ if and only if it is a mixture of the bridges of $\PR$, i.e.
\[
\QR = \int \PR^{xy} \mu(dx,dy)
\]
for some probability $\mu$ on $\RD \times \RD$. 

Along the lines of what we have discussed above, it is desirable to characterize the probability measures belonging to the reciprocal class of $\PR$ by
infinitesimal characteristics. 
One first question in this direction is the following. 
Assume $\PR$ is the law of a Markov process with infinitesimal generator $L$. 
Given another Markov generator $L'$, under what conditions the laws of the Markov processes generated by $L'$ belong to the reciprocal class of $\PR$? 
This question is well understood for $\RD$-valued diffusion processes with smooth coefficients, and it has motivated the search for the so-called {\em
reciprocal invariants}: the collection of reciprocal invariants forms a functional $F(L)$ of the coefficients of the generator, such that the following
statement holds: the laws of the processes generated by $L$ and $L'$ belong to the same reciprocal class if and only if $F(L) = F(L')$. 
Explicit expressions for
the reciprocal invariants of diffusion processes can be found in \cite{Kre88, Cl91, LeKre93}. 
For pure jump processes with values in a discrete state space, the understanding of reciprocal invariants is very limited, except for some
 special cases like counting processes (\cite{LMR},\cite{Roell13}) or for pure jump processes with independent
increments under very restrictive assumption on the geometry of jumps (called incommensurability), see Chapter 8 in \cite{Murr12}. 

In this paper we consider possibly time-inhomogeneous compound Poisson processes
with jumps in a finite set $\A$,  considerably weakening the assumptions in \cite{Murr12}. Our analysis reveals how
reciprocal invariants are related to the geometric and graph-theoretic structure induced by the jumps.
Close ideas apply to other context where a similar
structure emerges, in particular to random walks of graphs, which will be treated in a forthcoming paper. 
To make clearer the improvement with respect to \cite{Murr12}, we
note that the assumption there imply that the corresponding graph structure in {\em acyclic}; 
In our framework, we will see that {\em cycles} are exactly the main parameter in the
collection of reciprocal invariants, see Proposition \ref{anytime}.

The basic tool for identifying the reciprocal invariants is provided by functional equations, called duality formulae or integration by parts formulae, which represent a subject of independent interest. 
They have provided in particular useful characterizations of 
Poisson processes (\cite{Sli62, Mec67}). 
The idea of restricting the class of test functions in the duality formula in order to characterize the whole reciprocal class has
appeared for the first time in \cite{RT02,RT05} in the framework of diffusions. In this paper we make explicit a functional equation containing a difference operator, which only depend on reciprocal invariants and characterize the reciprocal class of compound Poisson processes. 

The paper is organized as follows. In Section \ref{sec:framework} we set up the necessary notations and provide the relevant definitions. 
In Section \ref{sec: trans } we define suitable transformations on the path space, and compute the density of the image of the law of the process under
these transformations. This will allow in Section \ref{sec:char} to derive the duality formulae and to identify the reciprocal invariant. At the end of Section
\ref{sec:char} we also give an interpretation of the reciprocal invariants in the time-homogeneous case, in terms on the asymptotic probability the process
follows a given cycle. This could be extended to other contexts, e.g. to Markov diffusions, providing an alternative to the physical interpretation given in
\cite{Cl91}. These extensions will be the subject of a forthcoming work.

\section{Framework. Some definitions and notations}\label{sec:framework}

We consider $\RD$-valued random processes with finitely many types of jumps, chosen in a given set  
\begin{equation}\label{jumpmatrix}
\A= \left\{ a^{1},...,a^{A} \right\} \subseteq \RD
\end{equation}
of cardinality $A$.
We associate to  $\A$ the  matrix 
$\mathbf{A}=( a^{j}_{i})_{1 \leq i \leq d, 1 \leq j\leq A} \in \mathbb{R}^{d \times A}$.
Their paths are elements of $\mathbb{D}([0,1],\RD)$, the space of right continuous with left limits functions on $[0,1]$ (usually called in french {\em
c\`adl\`ag} paths), equipped
with its canonical filtration $\left( \mathcal{F}_{t} \right)_{t \in [0,1]}$. $\mathcal{F}_{[s,t]}$ is defined by $\sigma( ( X_r)_{r \in [s,t]} )$

In this setting, paths can be described by the counting processes corresponding to each type of jumps. 
It is therefore natural to introduce the following random variables:
\begin{definition}
Let define  $\mathbf{N}=(\mathbf{N}_t)_{0 \leq t \leq 1}$, where $\mathbf{N}_t:=(N_t^{1},...,N_{t}^{A})$ and, for any $j \in \{1,...,A\},  N^j_t$ counts
how many times the jump $a^j$ has occurred up to time $t$:
  \begin{eqnarray*}
 N^{j}_{t}(\om)=  \sum_{s \leq t} \mathbf{1}_{\left\{ \om_{s} - \om_{s^{-}} = a^{j} \right\} }.
 \end{eqnarray*}
The total amount of jumps  up to time $t$, $ |N|_{t}$,  is given by the sum of the coordinates of $\mathbf{N}_t$, that is 
$ |N|_{t}:= \sum_{j=1}^{A} N^j_t$. \\
The $i$-th jump time of type  $a^j$ is:
\begin{equation*}
 T_{i}^{j}:= \inf \left\{ t \in [0,1]: N^{j}_{t} =i \right\} \wedge 1.
 \end{equation*}
Finally, the $i^{th}$ jump time of the process is:
\begin{equation*}
 T_i:= \inf \{ t \in [0,1]: \ |N|_t = i \} \wedge 1.
\end{equation*}

 \end{definition}

 Then, we can express the canonical process as $X_t = X_0 + \sum_j a^j N^j_t$, which leads to introduce the following 
  set $\OO$ of paths indeed carried by the  processes we consider here:
\begin{equation*}
\begin{split}
\OO & = \big\{ \om : |N|_1(\om) <+ \infty \textrm{ and } X_t(\om)=  X_0(\om) + \mathbf{A} \mathbf{N}_t(\om), \ 0 \leq t \leq 1 \big\} \\ &  \subseteq
\mathbb{D}([0,1],\RD) .
\end{split}
\end{equation*}


We also define the set $ \s $ of possible initial and final values $(X_0,X_1) $ for paths in $\OO$:
\begin{equation*}\label{margsupport}
\s:= \left\{ (x,y) \in \RD \times \RD, \exists \ \nv \in \N^{A} \  \textrm{such that} \ y= x + \mathbf{A}\nv \right\} .
\end{equation*}

 \vspace{0.2cm}
For any measurable space $\mathcal{X}$, we will denote by  $\mathcal{M}(\mathcal{X})$ 
the set of all non negative measures on $\mathcal{X}$ and 
by $\PM (\mathcal{X})$ the subset of  probability measures on $\mathcal{X}$.
 $\BM(\mathcal{X})$ is the set of bounded measurable functions on  $\mathcal{X}$.

A general element of $\PM(\OO)$ will be denoted by $\QR$.
Concerning its time projections, we use the notations
\begin{equation*}
\QR_{t}:=\QR \circ X_t^{-1} \textrm{ and } \quad \QR_{01} : = \QR\circ (X_0,X_1)^{-1} 
\end{equation*}
for the law at time $t$, resp. the joint law at time $0$ and $1$.

As reference process 
, we will consider in this paper a time-inhomogeneous compound Poisson process 
denoted by $\PNX$, 
where  $x \in \RD $ is a fixed initial position and $\nu$ is a regular jump measure belonging to the set  
\begin{multline}\label{ eq: charact}
\smch : = \Big\{   \nu \in \mathcal{M}(\A \times [0,1] ), \ \nu (dx,dt) =  \sum_{j=1}^A  \delta_{a^j} (dx)\otimes \nu^{j}(t) dt,  \\
 \nu^{j}(\cdot) \in
C([0,1],\R_{+}), 1\leq j \leq A \Big\} .
\end{multline}
 Heuristically, the process with law $\PNX$ leaves its current state at rate $\sum_{j=1}^{A} \nu^j$ and when it jumps, 
it chooses the jump $a^j$ with probability $\nu^j/\sum_{j=1}^{A} \nu^j$.
More precisely, under $\PNX$ the canonical process $X$ satisfies  $X_0 = x$ a.s. and
it has independent increments whose distribution is determined by its characteristic function
\begin{equation} \label{laptransf}
\PNX \Big( \exp \big(i \lambda \cdot (X_t - X_s)\big) \Big) = \exp\Big( \sum_{j=1}^A \big(e^{i \lambda \cdot a^j} -1 \big) \int_s^t  \nu^j(r) dr \Big), \
\lambda \in \RD,
\end{equation}
where $\lambda \cdot x$ is the scalar product in $\RD$. Note that here, as well as in the rest of the paper, 
for  $\QR \in \PM(\OO)$ and $F: \OO \rightarrow \mathbb{R}$, we write $\QR(F)$ for $\int F d\QR$.\\
 The properties 
of $\PNX$ are well known, in particular its semimartingale characteristics $(0,\nu,0 )$, see e.g. Chapters II and III  of \cite{JaShi03}.


\subsection{Reciprocal classes}
We first define a bridge of the compound Poisson process $\PNX$.
\begin{definition}\label{def:bridge} 
For $(x,y) \in \s$ and $\nu \in \smch $, $\PNB$, the bridge of the compound Poisson process from $x$ to $y$, is given by the probability measure on $\OO$:
\begin{equation*}\label{eq:bridge}
 \PNB :=\PNX(\ .\ |X_{1}=y).
\end{equation*}
\end{definition}

\begin{remark}
Note that $\PNB$ is well defined as soon as $(x,y) \in \s $, since in that case $\PNX(X_1=y)>0$.
\end{remark}

The reciprocal class associated to the jump measure $\nu \in \smch$ is now defined  as the set of all possible mixtures of  bridges in the family 
$(\PNB)_{(x,y) \in \s }$.
\begin{definition}\label{def-clasrec} 
Let  $\nu \in \smch $. Its associated \emph{reciprocal class} is the following set of probability measures on $\OO$:
 \begin{equation*}\label{eq:clasrec}
 \Rec ( \nu ) := \big\{ \QR \in \PO :  \QR(\cdot)=\int_{ \s  } \PNB (\cdot) \, \QR_{01}(dxdy) \big\}.
\end{equation*}
\end{definition}


Let us describe the specific structure of any probability measure in  the reciprocal class $\Rec ( \nu )$.

\begin{prop}\label{prop:Q=hP}
Let $ \QR \in \PO $. Define then the compound Poisson process $\PN^{\QR} $ with the same dynamics as $\PNX$ but the same initial distribution  as $\QR$ by
$ \PN^{\QR} = \int_{\RD} \PNX(.) \QR_{0}(dx).$
Then the following assertions are equivalent:
\begin{itemize}
\item[$i$)]  $\QR \in \Rec(\nu)$
\item[$ii$)] $\QR$ is absolutely continuous with respect to $\PN^{\QR}$ and the density $d\QR/d\PN^{\QR}$ is 
$\sigma(X_0,X_1)$-measurable.
\end{itemize}
\end{prop}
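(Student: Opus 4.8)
The plan is to exploit the fact that both $\QR$ (once it is known to lie in $\Rec(\nu)$) and the reference measure $\PN^{\QR}$ are mixtures of the \emph{same} family of bridges $(\PNB)_{(x,y)\in\s}$, so that their mutual relationship is entirely encoded in their endpoint laws $\QR_{01}$ and $\PN^{\QR}_{01}$. The first step is to record the bridge disintegration of the reference measure. Since under $\PNX$ one has $X_0=x$ a.s. and the law $k_x:=\PNX\circ X_1^{-1}$ of $X_1$ is carried by the lattice $\{x+\mathbf{A}\nv:\nv\in\N^A\}$, conditioning $\PNX$ on $\{X_1=y\}$ gives $\PNX=\int\PNB\,k_x(dy)$; integrating against $\QR_0$ then yields
\[
\PN^{\QR}=\int_{\s}\PNB\,\PN^{\QR}_{01}(dxdy),\qquad \PN^{\QR}_{01}(dxdy)=\QR_0(dx)\,k_x(dy).
\]
Because the bridge depends only on the endpoints, this says that the regular conditional probability $\PN^{\QR}(\,\cdot\mid X_0,X_1)$ equals $\PNB$, $\PN^{\QR}_{01}$-a.s. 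I will use throughout that, by Definition \ref{def-clasrec}, membership $\QR\in\Rec(\nu)$ is equivalent to the statement $\QR(\,\cdot\mid X_0,X_1)=\PNB$, $\QR_{01}$-a.s.; note also that $\QR_{01}$ is automatically supported on $\s$ for every $\QR\in\PO$.

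For the implication $(i)\Rightarrow(ii)$, I would assume $\QR=\int_{\s}\PNB\,\QR_{01}(dxdy)$ and first observe that the first marginal of $\QR_{01}$ is $\QR_0$, which is also the first marginal of $\PN^{\QR}_{01}$. Next I would prove $\QR_{01}\ll\PN^{\QR}_{01}$: disintegrating both measures along the common first marginal $\QR_0$ reduces matters, for fixed $x$, to comparing the conditional law of $X_1$ with $k_x$. Both are carried by the countable lattice $\{x+\mathbf{A}\nv\}$, and by the Remark following Definition \ref{def:bridge} one has $k_x(\{y\})=\PNX(X_1=y)>0$ for every $(x,y)\in\s$; hence $k_x$ dominates any measure on that lattice and absolute continuity follows. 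Writing $g:=d\QR_{01}/d\PN^{\QR}_{01}$, I would then check that $g(X_0,X_1)$ is the sought density: for bounded measurable $F$, using the disintegration and that $(X_0,X_1)=(x,y)$ is constant under $\PNB$,
\[
\PN^{\QR}\big(g(X_0,X_1)F\big)=\int_{\s}g(x,y)\,\PNB(F)\,\PN^{\QR}_{01}(dxdy)=\int_{\s}\PNB(F)\,\QR_{01}(dxdy)=\QR(F),
\]
so that $d\QR/d\PN^{\QR}=g(X_0,X_1)$ is indeed $\sigma(X_0,X_1)$-measurable.

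For the converse $(ii)\Rightarrow(i)$, suppose $d\QR/d\PN^{\QR}=h(X_0,X_1)$. Testing against functions of the endpoints immediately gives $\QR_{01}(dxdy)=h(x,y)\,\PN^{\QR}_{01}(dxdy)$. Then, for bounded $F$ and bounded $\phi$, I would compute, again using the bridge disintegration of $\PN^{\QR}$ and the constancy of $(X_0,X_1)$ under $\PNB$,
\[
\QR\big(F\,\phi(X_0,X_1)\big)=\PN^{\QR}\big(h(X_0,X_1)\phi(X_0,X_1)F\big)=\int_{\s}h(x,y)\phi(x,y)\,\PNB(F)\,\PN^{\QR}_{01}(dxdy),
\]
which by the identity for $\QR_{01}$ equals $\int_{\s}\phi(x,y)\,\PNB(F)\,\QR_{01}(dxdy)=\QR\big(\phi(X_0,X_1)\,\PNB(F)\big)$. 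Since $\phi$ is arbitrary, the $\sigma(X_0,X_1)$-conditional expectation of $F$ under $\QR$ equals $\PNB(F)$; letting $F$ range over a measure-determining family of bounded functions forces $\QR(\,\cdot\mid X_0,X_1)=\PNB$, $\QR_{01}$-a.s., i.e. $\QR\in\Rec(\nu)$.

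The step I expect to be most delicate is the absolute continuity $\QR_{01}\ll\PN^{\QR}_{01}$ in $(i)\Rightarrow(ii)$: it is where the specific structure of the compound Poisson reference enters, through the strict positivity $\PNX(X_1=y)>0$ on $\s$ and the discreteness of the endpoint support $x+\mathbf{A}\N^A$. The remaining ingredients — existence of the regular conditional probabilities $\PNB$ and the bridge disintegration of $\PN^{\QR}$ — are standard on the Polish path space $\mD([0,1],\RD)$ but should be invoked explicitly; everything else is bookkeeping with conditional expectations.
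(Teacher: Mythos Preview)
Your proof is correct and follows essentially the same route as the paper's: both directions hinge on first establishing $\QR_{01}\ll(\PN^{\QR})_{01}$ via the strict positivity $\PNX(X_1=y)>0$ on $\s$, and then transferring this endpoint density to the path level through the shared bridge disintegration. Your disintegration of $\QR_{01}$ along the common first marginal $\QR_0$ is a slightly cleaner phrasing of the paper's direct set-theoretic argument, but the content is the same.
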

\begin{proof}
$i$) $\Rightarrow ii$)\\
We first prove that  
 $\QR_{01}$ is absolutely continuous with respect to $(\PN^{\QR})_{01}$.\\
 Let us consider a Borel set $O \subseteq \R^d \times \R^d$ such that $\QR_{01}(O)>0$. 
 There exists $\nv \in \N^A$ such that $ \QR_{01}(O \cap \{ \mathbf{N}_1=\nv\} )>0$.
We can rewrite this event in a convenient way:
\begin{equation*}
\left\{ (X_0,X_1) \in O \right\} \cap  \{ \mathbf{N}_1=\nv\}   = \left\{ X_0 \in \tau_\nv^{-1}(O) \right\} \cap \  \{ \mathbf{N}_1 = \nv\} 
\end{equation*}
where  $\tau_\nv: \R^d \rightarrow \R^d \times \R^d$ is the map $ x \mapsto (x,x+\mathbf{A}\nv) $.
Since $\QR_{01}(O \cap \{ \mathbf{N}_1=\nv\} ) >0$, $\QR_{0}( \tau_\nv^{-1}(O)) = ({\PN})_{0}( \tau_\nv^{-1}(O)) >0$. \\
A simple application of the Markov property of $\PN^{\QR}$ implies 
that
\begin{equation*}
\begin{split}
(\PN^{\QR})_{01} \left( O \right) &  \geq \PN^{\QR} \left( (X_0,X_1) \in O \cap \{ \mathbf{N}_1=\nv \} \right) \\ & = (\PN^{\QR})_0( \tau_\nv^{-1}(O) ) \PN^{\QR}( \mathbf{N}_1=\nv)>0.
\end{split}
\end{equation*}
Therefore we can conclude that $\QR_{01}<<(\PN^{\QR})_{01}$ and we denote by $h$ the density function  $d\QR_{01} / d(\PN^{\QR})_{01}$.\\
Finally, let us choose any $F \in \BM(\Omega)$. By hypothesis:
\begin{eqnarray*}
\QR \big( F \big) &=& \QR( \QR[ F | X_0,X_1 ]  ) = \QR \big( \PN^{X_0,X_1} (F) \big) \\
&=& \PN^{\QR} \big( \PN^{X_0,X_1}(F) h(X_0,X_1) \big) = \PN^{\QR} \big( \PN^{X_0,X_1} (F h(X_0,X_1) ) \big) \\ & =&  \PN^{\QR} \big(F h(X_0,X_1) \big),
\end{eqnarray*}
which leads to the conclusion that
\[
\frac{d\QR}{d\PN^{\QR}} =  \frac{d\QR_{01}}{d(\PN^{\QR})_{01}} =  h(X_0,X_1).
\]
This proves $ii$).\\
$ii$) $\Rightarrow i$)
Suppose that there exists a non negative measurable function $h$ such that $d\QR / d\PN^{\QR}=h(X_0,X_1)$.
It is a general result in the framework of reciprocal processes that, in that case,  $\QR \in \Rec(\nu)$, see e.g. Theorem 2.15 in \cite{LRZ}. 
For sake of completeness, we recall shortly the arguments. Let us take three measurable test functions  $\phi, \psi, F$.
\begin{eqnarray*}
\QR \left(  \phi(X_0) \psi(X_1) F  \right) & = & \PN^{\QR} \left(  \phi(X_0) \psi(X_1) h(X_0,X_1) F \right) \\
 	&=&  \PN^{\QR} \left(  \phi(X_0) \psi(X_1) h(X_0,X_1) \PN^{\QR} (F|X_0,X_1 ) \right)\\
	&=&\QR \left(  \phi(X_0) \psi(X_1) \PN^{\QR} ( F|X_0,X_1 ) \right) .
\end{eqnarray*}
Thus $ \QR (F|X_0,X_1) = \PN^{\QR}(F|X_0,X_1) \ \QR \ps $ for arbitrary functions $F$ which implies that 
\begin{equation*}
\QR(. |X_0=x, X_1=y) = \PNB  \quad \QR_{01} \ps 
\end{equation*}
and the decomposition written in Definition \ref{eq:clasrec} follows.
\end{proof}


\section{The time and space transformations } \label{sec: trans }
In this section we define two families of transformations on the path space $\OO$, and we analyze their action on the probability measures of the
reciprocal class $\Rec(\nu)$. 
This will later provide (in Theorem \ref{thm:char}) a characterization 
of $\Rec(\nu)$ as the set of probability measures under which a functional equation holds true. 
As a consequence, we will see that $\Rec(\nu)$ depends on $\nu$ only through a family of specific functionals of $\nu$, 
that we call {\em reciprocal invariants}.

\subsection{Time Changes}
 We consider  the set $\U$ of all regular diffeomorphisms of the time interval $[0,1]$, parametrized by the set $\A$: 
\begin{multline*}
\U = \Big\{ u  \in C^{1} ( \{1,\cdots,A\} \times [0,1];[0,1] ), u(\cdot,0) \equiv 0,u(\cdot,1) \equiv 1,  \\  \min_{j \in A, t \in [0,1]} \dot{u}(j,t) > 0 \Big\}.
\end{multline*}

 
With the help of each $u \in \U$ we construct a transformation of the reference compound Poisson process by time changes acting separately on each
component process $N^j, j=1,...,A$.
\begin{definition}\label{def:timepert}
Let $u \in \U$. We define the time-change transformation $\pi_u$ by: 
\begin{eqnarray*}\label{eq:timepert}
\pi_{u}:  \OO  & \longrightarrow & \mathbb{D}([0,1],\RD) \\
 \pi_{u}(\om)(t) &:= & \om(0) +\sum_{j =1}^{A} a^{j} N^{j}_{u(j,t)}(\om), \ 0\leq t \leq 1.
\end{eqnarray*}
\end{definition}
\begin{remark} 
We cannot a priori be sure that $\pi_u$ takes values in $\Omega$ since
it may happen that jumps sincronize, i.e. $u^{-1}(j,T^j_i) = u^{-1}(j',T^{j'}_{i'})$
 for some $j,j'$. 
 However it is easy to see that this happens with zero probability under $\PNX$.
\end{remark}
We now define a family of maps called {\it reciprocal time-invariants}.
\begin{definition}
 The  \textbf{ reciprocal time-invariant} associated to $\nu$ is the function:
\begin{eqnarray*}
\tinv: \{1,\cdots,A\} \times  [0,1]^2 \rightarrow \R_{+} ,  \quad \tinv (j,s,t) := \frac{\nu^j(t)}{\nu^j(s)} .
\end{eqnarray*}
\end{definition}
\begin{remark}
Note that in the time-homogeneous case $\tinv \equiv 1$.
\end{remark}
In the next proposition we shall prove that the image of $\PNX$ under the above time change $\pi_u$ is absolutely continuous with respect to $\PNX$,
and that its density is indeed a function of the reciprocal time-invariant  $\tinv$.

\begin{prop}\label{prop:udns}
 The following functional equation holds under $\PNX$: For all $u \in \U$, 
 \begin{equation}\label{eq: udns}
  \PNX \Bigl(F \circ \ \pi_{u} \Bigr) =\PNX \Bigl( F \GT \Bigr),  \quad \forall F \in \BM(\Omega)  .
 \end{equation}
\end{prop}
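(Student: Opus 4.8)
The plan is to exploit the product structure of $\PNX$. The characteristic function \eqref{laptransf} factorizes over $j$, so under $\PNX$ the coordinate processes $N^{1},\dots,N^{A}$ are independent inhomogeneous Poisson processes, with $N^{j}$ of deterministic intensity $\nu^{j}(\cdot)$ on $[0,1]$. Since $\pi_{u}$ acts coordinatewise, replacing $N^{j}_{t}$ by $N^{j}_{u(j,t)}$, both the image law $\PNX\circ\pi_{u}^{-1}$ and the candidate density $\GT$ factorize over $j$; it therefore suffices to prove the one-dimensional identity for a single Poisson process $N$ of intensity $\nu(\cdot)$ subjected to a fixed $u\in C^{1}([0,1];[0,1])$ with $u(0)=0$, $u(1)=1$, $\dot u>0$, and then take products over $j$. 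As observed in the remark after Definition \ref{def:timepert}, distinct coordinates synchronize a jump with $\PNX$-probability zero, so $\pi_{u}(\om)\in\OO$ almost surely and the left-hand side of \eqref{eq: udns} is well defined.

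For the one-dimensional statement I would first record how $\pi_{u}$ transports jumps: if $N$ jumps at time $s$, then $\tilde N_{t}:=N_{u(t)}$ jumps at the unique $t$ with $u(t)=s$, i.e. at $u^{-1}(s)$, so $\pi_{u}$ sends a configuration $\{s_{1},\dots,s_{n}\}$ of $N$ to $\{u^{-1}(s_{1}),\dots,u^{-1}(s_{n})\}$. Then I would insert this into the Poisson representation of the expectation of a bounded functional $\Phi$,
\[
\PNX(\Phi)=e^{-\Lambda}\sum_{n\ge0}\frac{1}{n!}\int_{[0,1]^{n}}\Phi(\{s_{1},\dots,s_{n}\})\prod_{i=1}^{n}\nu(s_{i})\,ds_{i},\qquad \Lambda=\int_{0}^{1}\nu(t)\,dt,
\]
apply it to $\Phi=F\circ\pi_{u}$, and substitute $s_{i}=u(r_{i})$, $ds_{i}=\dot u(r_{i})\,dr_{i}$ in each coordinate. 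Because $u$ is a diffeomorphism of $[0,1]$ fixing the endpoints, each $r_{i}$ again sweeps $[0,1]$ and $\int_{0}^{1}\nu(u(r))\dot u(r)\,dr=\int_{0}^{1}\nu(s)\,ds=\Lambda$, so the weight $e^{-\Lambda}/n!$ is left untouched. Comparing the resulting expression with the same representation applied to $\PNX(F\cdot D)$ forces
\[
D=\prod_{\text{jumps }r\text{ of }N}\frac{\nu(u(r))\,\dot u(r)}{\nu(r)}=\exp\Bigl(\int_{0}^{1}\log\frac{\nu(u(t))\,\dot u(t)}{\nu(t)}\,dN_{t}\Bigr).
\]
Restoring the index and using $\tinv(j,t,u(j,t))=\nu^{j}(u(j,t))/\nu^{j}(t)$, the product over $j$ of these one-dimensional densities is exactly $\GT$.

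The same conclusion can be reached more conceptually through Girsanov's theorem for point processes: a deterministic time change turns $N$ into the Poisson process $\tilde N=N_{u(\cdot)}$ of intensity $\tilde\nu(t)=\nu(u(t))\dot u(t)$, so $\PNX\circ\pi_{u}^{-1}$ is the Poisson law of intensity $\tilde\nu$, whose density with respect to the law of intensity $\nu$ is $\exp\bigl(\int_{0}^{1}\log(\tilde\nu/\nu)\,dN_{t}-\int_{0}^{1}(\tilde\nu-\nu)\,dt\bigr)$. The compensator $\int_{0}^{1}(\tilde\nu-\nu)\,dt$ vanishes by the same endpoint-preserving change of variables, and this cancellation is the heart of the matter: it is why the density collapses to a multiplicative functional over the jumps with no bulk term, depending on $\nu$ only through the reciprocal time-invariant $\tinv$ (dressed by the Jacobian $\dot u$, which sits inside the logarithm). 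I expect the delicate point to be precisely this bookkeeping---checking that the total intensity mass is conserved and that $\dot u$ enters inside the logarithm---rather than any analytic difficulty. The measure-theoretic justification (that the elementary Poisson representation, or equivalently the exponential-martingale formula, applies to a general $F\in\BM(\OO)$, by conditioning on the number of jumps together with a monotone-class argument) is routine and I would only sketch it.
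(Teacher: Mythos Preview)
Your proposal is correct. The Girsanov route you sketch second is exactly the paper's argument: it identifies $\PNX\circ\pi_{u}^{-1}$ as the compound Poisson law with coordinate intensities $\nu^{j}(u(j,\cdot))\,\dot u(j,\cdot)$ (the paper does this via a short martingale computation for each $N^{j}\circ\pi_{u}$ and the coordinatewise independence), applies Girsanov's theorem, and then notes that the compensator term $\int_{0}^{1}\bigl(\nu^{j}(u(j,t))\dot u(j,t)-\nu^{j}(t)\bigr)\,dt$ vanishes by the change of variables $t'=u(j,t)$.

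Your first route, through the explicit Poisson expansion over unordered jump configurations, is a more elementary alternative that the paper does not use. It bypasses the martingale\,/\,Girsanov machinery at the price of writing out the $n$-fold integrals, and the same cancellation shows up in a slightly different guise: the identity $\int_{0}^{1}\nu(u(r))\dot u(r)\,dr=\Lambda$ is what keeps the prefactor $e^{-\Lambda}/n!$ intact, rather than what kills a compensator. Both arguments isolate the endpoint-preserving change of variables as the essential step and arrive at the same density.
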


\begin{proof}
We first observe that, for every fixed $j \in \{1,...,A\}$ the process
\begin{equation}
N^j _{t} \circ \pi_u - \int_{0}^{t} \nu^j(u(j,s))  \dot{u}(j,s) ds
\end{equation}
is a  $\PNX$-martingale w.r.t. to its natural filtration $\tilde{\mathcal{F}}$.
Indeed, for any $s\leq t$ and any $F$ $\mathcal{\tilde{F}}_s$-measurable,  by applying the basic properties of
processes with independent increments, we obtain:
\begin{equation*}
\begin{split}
\PNX \big( F \ (N^j_t-N^j_s) \circ \pi_u  \big) &  =  
\PNX \big(F \big) \int^{u(j,t) }_{u(j,s)} \nu^j(r) dr \\ & = \PNX \big(F \big)\int^{t }_{s}\nu^j (u(j,r)) \dot{u}(j,r) dr .
\end{split}
\end{equation*}
Therefore $N^j _{t} \circ \pi_u$ is an inhomogeneous Poisson process with intensity  $$\nu^j (u(j,\cdot)) \dot{u}(j,\cdot).$$
Moreover, if $j \neq j'$, $N^j_\cdot \circ \pi_u$ and $ N^{j'}_\cdot \circ \pi_u $ are independent processes under $\PNX$, because the
processes $N^j$ and $N^{j'}$ are independent and $\pi_u$ acts separately on each component.
This implies that the image of $\PNX$ under  $\pi_u $, $\PNX \circ \pi^{-1}_u$, is a compound Poisson process whose jump measure is given by
$ \sum_{j=1}^A  \delta_{a^j} (dx)\otimes \nu^j (u(j,t)) \dot{u}(j,t) dt $.\\
We can now apply the Girsanov theorem (see e.g. Theorem 5.35, Chapter III of
\cite{JaShi03}) to get the density of the
push-forward measure $\PNX \circ \pi^{-1}_u$ w.r.t. $\PNX$:
\begin{multline*}
\frac{d\PNX \circ \pi_u^{-1}}{d \PNX} = \exp \Big[ \sum_{j=1}^{ A} \Big( \int_0^1 \big( \nu^j (u(j,t)) \dot{u}(j,t)- \nu^j(t)\big) dt  \\ +  
\int_{0}^{1} \log(\Xi^{\nu}(j,t,u(j,t))\dot{u}(j,t) dN_t^j \Big) \Big].
\end{multline*}
With the change of variable $t=u^{-1}(t')$ we have for any $j$
\begin{equation*}
\int_{0}^{1}  \nu^j (u(j,t)) \dot{u}(j,t) dt  =\int_{0}^{1} \nu^j(t')dt' .
\end{equation*}
Therefore the first integral disappears and the conclusion follows.
\end{proof}

\begin{remark}
In the recent work \cite{LMR},
the authors establish a differential version of the equation \eqref{eq: udns}, in the context of counting processes 
(i.e. $\A= \left\{ 1 \right\}$) with a possibly space-dependent intensity.
Such a formula is inspired by the Malliavin calculus for jump processes 
developed in \cite{carlen88} puting in
duality a differential operator
and the stochastic integral.
We can, without getting into the details, relate the functional equation \eqref{eq: udns} with the formula proved there as follows:
First consider a smooth function $v$ satisfying the loop condition $\int_{0}^{1} v_t \ dt = 0 $ and 
define the function $u^{\varepsilon}_t:=
t+ \varepsilon \int_0^t v_s \ ds $, $\varepsilon>0$. Note that, for $\varepsilon$ sufficiently small, $u^{\varepsilon} \in \U$. 
If we then apply \eqref{eq: udns} to a smooth
functional of the jump times, we obtain after some elementary operations:
\begin{equation*}
\frac{1}{\varepsilon} \PNX ( F \circ \pi_{u^{\varepsilon}} - F ) = 
\frac{1}{\varepsilon}\PNX \bigg( F \Big( \exp ( \int_{ [0,1]} \log \Xi^{\nu}(1,t,u^{\varepsilon}(t)) \ \dot{u}^{\varepsilon}_t \  dN_t ) -1 \Big) \bigg) .
\end{equation*}
If we now let $\varepsilon $ tend to 0, we obtain the duality formula
\begin{equation*}
\PNX \big( D_v F \big) = \PNX \big( F \int_{0}^{1} v_t \ dN_t )+ \PNX \big( F \int_{0}^{1} v_t \int_{t}^{1}  \frac{\dot{\nu}(s)}{\nu(s)} dN_s dt \big)
\end{equation*}
where $
D_v F = \lim_{\varepsilon \rightarrow 0} (F \circ \pi_{u^{\varepsilon}} - F)/\varepsilon. $\\
This formula can then be extended to space-dependent intensities.
\end{remark}

\subsection{Space transformations}
The transformations $\pi_u$ introduced in the previous section, when acting on a given path, 
change the jump times leaving unchanged the total number of jumps of each type. We now introduce transformations that modify the total
number
of jumps; these transformations act on the counting variable $\mathbf{N}_1 $  taking its values in $ \N^{A}$, which we embed into $\Z^{A}$ to take
advantage of the lattice structure.

\subsubsection{Shifting a Poisson random vector}
 We first consider  a
multivariate Poisson distribution ${\mathfrak{p}}_{\lambda} \in \PM(\N^A)$ where$\lambda=(\lambda^1,...,\lambda^A) \in \R_{+}^A$:
\begin{equation}\label{mpoisson}
\forall \nv \in \N^A, \quad \pl (\nv) = \exp \left(- \sum_{j=1}^{A} \lambda^j \right) \prod_{j=1}^{A} \frac{(\lambda^j)^{n^j}}{n^j!} \quad .
\end{equation}

We first give a straightforward multidimensional version of Chen's characterization of
a Poisson random variable. He 
 introduced it  to estimate the rate of convergence of
sum of dependent trials to the Poisson distribution 
(see the original paper \cite{Chen} and Chapter 9 in \cite{Ste86} for a complete account of Chen's
method). 
\begin{prop}\label{canonicalChen}
Let $\lambda \in (\R_{+})^A$. Then $\rho \in \mathcal{P}(\N^A)$ is the multivariate Poisson distribution  $ \pl$ if and only if 
\begin{equation*}
 \forall \mathbf{e}^j, j=1, \dots A, \quad \rho( f ( \nv + \mathbf{e}^j))  =\lambda^j \rho ( f(\nv) n^j), \quad \forall f \in  \BM(\N^A),
\end{equation*}
where $\mathbf{e}^j$ denote the $j$-th vector of the canonical basis of $\Z^A$. 
\end{prop}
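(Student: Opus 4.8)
The plan is to prove both implications. The forward direction is immediate from the defining recursion of the Poisson distribution, so the substance is the converse: to show that the stated functional identities force $\rho = \pl$. Since $\N^A$ is countable, it suffices to show that $\rho(\{\nv\}) = \pl(\nv)$ for every $\nv \in \N^A$, and I would do this by induction on the coordinates, using the test functions $f = \1_{\{\nv\}}$.

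First I would verify the forward implication. If $\rho = \pl$, then for any $f$ and any canonical vector $\mathbf{e}^j$ one computes directly from \eqref{mpoisson} that
\begin{equation*}
\pl(\nv + \mathbf{e}^j) = \frac{\lambda^j}{n^j+1}\,\pl(\nv),
\end{equation*}
equivalently $\pl(\nv)\,\lambda^j = (n^j+1)\,\pl(\nv+\mathbf{e}^j) = m^j\,\pl(\nv + \mathbf{e}^j - \mathbf{e}^j)\big|_{\mv = \nv+\mathbf{e}^j}$. Rewriting the left side of the claimed identity as a sum over $\mv$ and reindexing $\mv = \nv + \mathbf{e}^j$ turns $\rho(f(\nv+\mathbf{e}^j))$ into $\sum_{\mv: m^j \geq 1} f(\mv)\,\pl(\mv - \mathbf{e}^j)$, and the elementary one-step relation above identifies this with $\lambda^j \sum_{\mv} f(\mv)\,m^j\,\pl(\mv) = \lambda^j\,\rho(f(\nv)\,n^j)$. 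This is just the componentwise Stein–Chen identity, so it holds coordinate by coordinate.

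For the converse, assume the identities hold for all $j$ and all bounded $f$. Taking $f = \1_{\{\nv\}}$ in the $j$-th identity gives a relation between $\rho$ evaluated at a single point shifted by $\mathbf{e}^j$ and a weighted value at $\nv$; concretely it yields
\begin{equation*}
\rho(\{\nv + \mathbf{e}^j\}) = \frac{\lambda^j}{n^j + 1}\,\rho(\{\nv\}), \qquad j = 1,\dots,A.
\end{equation*}
This is a first-order recursion on the lattice $\N^A$ that determines every value $\rho(\{\nv\})$ from the single value $\rho(\{\mathbf{0}\})$: starting at the origin and applying the recursion $n^j$ times in each coordinate direction gives $\rho(\{\nv\}) = \rho(\{\mathbf{0}\})\prod_{j=1}^A (\lambda^j)^{n^j}/n^j!$. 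The multiplicative structure makes the order of the coordinate steps irrelevant, so the recursion is consistent. Finally, normalization $\sum_{\nv} \rho(\{\nv\}) = 1$ forces $\rho(\{\mathbf{0}\}) = \exp(-\sum_j \lambda^j)$, which is exactly $\pl(\{\mathbf{0}\})$, and hence $\rho = \pl$.

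I expect the only genuine subtlety to be a bookkeeping one rather than a conceptual obstacle: one must handle the boundary of $\N^A$ correctly, since the identity with $f = \1_{\{\nv\}}$ and $\mv = \nv + \mathbf{e}^j$ on the right produces the factor $m^j = n^j + 1 \geq 1$, so no information is lost, whereas applying the recursion "backwards" past a coordinate hyperplane $\{n^j = 0\}$ is never needed. The main point to state carefully is that the recursion, read in the increasing direction, is well-posed and pins down all masses from the origin; consistency across different coordinate orderings follows from the explicit product form. No step beyond these elementary manipulations is required.
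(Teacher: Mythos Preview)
The paper does not supply a proof of this proposition: it is stated as a ``straightforward multidimensional version of Chen's characterization'' and simply referred to \cite{Chen} and \cite{Ste86}. Your argument is exactly the standard proof and is correct: the forward direction is the one-line Stein--Chen computation, and for the converse the choice $f = \1_{\{\mv\}}$ reduces the functional identity to the first-order recursion $\rho(\{\nv+\mathbf{e}^j\}) = \tfrac{\lambda^j}{n^j+1}\rho(\{\nv\})$, which propagates from the origin and is fixed by normalization.

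One small remark worth recording: the identity as printed in the proposition carries a typo. Comparing with the paper's own formula \eqref{vshift} specialized to $\vv = \mathbf{e}^j$ (so that $\lambda^{-\vv} = (\lambda^j)^{-1}$ and $G_{\mathbf{e}^j}(\nv) = n^j\,\1_{\{n^j\geq 1\}}$) shows the correct relation is
\[
\lambda^j\,\rho\big(f(\nv+\mathbf{e}^j)\big) \;=\; \rho\big(f(\nv)\,n^j\big),
\]
with the factor $\lambda^j$ on the left rather than the right. Your derivation of the recursion already uses this corrected form, so your proof is internally consistent; just be aware that the statement you are proving needs this fix.
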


One can interpret this characterization as the computation of the density of the image measure by any 
shift along the canonical basis of $\N^A$. 

Now we consider  as more general transformations multiple left- and right-shifts, acting simultaneously on each coordinate, that is, we shift by vectors $\vv 
\in \Z^A$. 

\begin{definition}
Let  $\vv \in \Z^{A}$. We define the $\vv$-shift by 
\begin{eqnarray*}\label{def:spaceshift}
\shv :\Z^{A} &\longrightarrow& \Z^A \\
  \zv &\mapsto& \shv(\zv ) = \zv +\vv  .
 \end{eqnarray*}
\end{definition}
Consider  the image  of $\pl$ under $\theta_{\vv}$. It is a probability measure whose support is no more included in $\N^A$ since  there may be $\zv \in \N^A$
such that $\shv(\zv)
\not\in \N^A$. 
Therefore  we only compute the density of its absolutely continuous component, appearing in 
the Radon-Nykodim decomposition:
\begin{equation}\label{RadNik}
\pl \circ \shv^{-1} = \pl^{\vv,ac} + \pl^{\vv,sing}.
\end{equation}
A version of the density of the absolutely continuous component is given by
\begin{equation*}
\frac{d\pl^{\vv,ac}}{d \pl } (\nv)=\lambda^{-\vv} \prod_{j=1}^{A}\frac{n^j!}{(n^j-v^j)!} \mathds{1}_{\left\{ n^j  \geq  v^j  \right\}}, \textrm{ where } 
\lambda^{\vv}:=\prod_{j=1}^A (\lambda^j)^{v^j}. 
\end{equation*}
In view of obtaining a change of measure formula as in Proposition \ref{canonicalChen} we define
\begin{equation} \label{Gc}
\GC(\nv):= \prod_{j=1}^{A}\frac{n^j!}{(n^j-v^j)!} \mathds{1}_{\left\{ n^j  \geq  v^j  \right\}} .
\end{equation}
Let us now consider the space $\BM^{\sharp}(\Z^A) \subseteq \BM(\Z^A)$ consisting of test functions with support in $\N^A$:
\begin{equation*}
\BM^{\sharp}(\Z^A):=\{ f \in \BM(\Z^A): f(\zv) = 0 \ \forall \zv \notin \N^A \} .
\end{equation*}
Then, the considerations above can be summarized in the following formula:
\begin{equation}\label{vshift}
  \pl( f \circ \shv )  =\lambda^{-\vv} \ \pl ( f \  \GC), \quad \forall f \in  \BM^{\sharp}(\Z^A) .
\end{equation}

\begin{example}\label{ex:shift}
Let $\mathcal{A} = \{ -1,1 \}$. We call $n^-$ (rather than $n^1$) and $n^+$(rather than $n^2$) the counting variables for the jumps $-1$ and $1$ respectively.
The same convention is adopted for the intensity vector $\lambda = ( \lambda^{-} , \lambda^{+} )$. 
Then, for $\vv =(1,1)$ (resp. $\vv= (1,-1)$ and for any $f \in \BM^{\sharp}(\Z^2)$,
\begin{equation*}
 \pl \Bigl( f (\nmin+1,\npiu+1) \Bigr)  = \frac{1}{\lambda^{-} \lambda^{+}} \, \pl \Bigl( f (\nmin,\npiu)
\nmin\npiu  \Bigr),
\end{equation*}
\begin{equation*}
 \pl \Bigl( f (\nmin+1,\npiu-1) \Bigr)  = \frac{\lambda^{+}}{ \lambda^{-}} \, \pl \Bigl( f (\nmin,{\npiu})
\frac{\nmin}{\npiu+1}  \Bigr).
\end{equation*}
\end{example}
\subsubsection{Lattices and conditional distributions}

We now consider, associated to a measure $\mu \in \PM(\N^A)$, the following set of probability measures on $\N^A$:
\begin{equation*}
\drec:= \big\{ \rho \in \PM(\N^A):  \rho(\cdot)=\int \mu(\cdot | \sigma(\mathbf{A})) \, d\rho_{\sigma(\mathbf{A})} \big\},
\end{equation*}
where the $\sigma$-algebra $ \sigma(\mathbf{A})$ is generated by the application $\zv \mapsto \mathbf{A}\zv$ defined on $\Z^A$, and the measure
$\rho_{\sigma(\mathbf{A})}$ is the projection of $\rho$ on $\sigma(\mathbf{A})$.\\
The set $\drec$ presents  strong analogies with a reciprocal class as introduced in Definition \ref{def-clasrec}. 
Indeed, one can prove an analogous to Proposition \ref{prop:Q=hP}, that is  
$$\rho  \in \drec \textrm{ if and only if } \rho << \mu   \textrm{ and } \frac{d\rho}{d\mu}  \textrm{ is } \ \sigma(\mathbf{A}) \textrm{-measurable}.$$

 Our first goal is to characterize $\drecl$ using the formula \eqref{vshift} computed for a suitably chosen set of shift vectors $\vv$. 
 The right set will be the following sublattice of $\Z^A$:
 \begin{equation}\label{def:C}
 \ker_\Z(\mathbf{A}):= \ker(\mathbf{A}) \cap \Z^A .
 \end{equation}
Let us observe that if two paths $\om, \tilde{\om} \in \Omega $ have the same initial and final values, $(X_0,X_1)( \om ) = (X_0,X_1)( \tilde{\om})$, then
$\NO(\om) - \NO(\tilde{\om}) \in \ker_\Z(\mathbf{A}) $.
The next statement clarifies the role of $\C$. 

\begin{prop}\label{prop:cycleshift}
Let $\rho \in \mathcal{P}(\N^A)$. Then $\rho \in \drecl$ if and only if 
\begin{equation}\label{eq:cycleshift}
  \forall \cv \in \C, \quad \rho( f \circ \theta_{\cv})  = \frac{1}{\lambda^{\cv}} \, \rho ( f \ \GCC ) \quad \forall f \in  \BM^{\sharp}(\Z^A) , 
\end{equation}
where $ \GCC$ is defined in \eqref{Gc}.
\end{prop}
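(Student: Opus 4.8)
The plan is to combine the measure-theoretic description of $\drecl$ --- the analogue of Proposition~\ref{prop:Q=hP} recalled just above, namely that $\rho \in \drecl$ if and only if $\rho \ll \pl$ and $g := d\rho/d\pl$ is $\sigma(\mathbf{A})$-measurable --- with the shift formula \eqref{vshift}. Two preliminary observations will be used throughout. First, since $\pl$ from \eqref{mpoisson} charges every point of $\N^A$, \emph{every} $\rho \in \PM(\N^A)$ is automatically absolutely continuous with respect to $\pl$, with density $g(\nv) = \rho(\{\nv\})/\pl(\{\nv\})$; thus only the $\sigma(\mathbf{A})$-measurability of $g$ is at stake. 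Second, since $\sigma(\mathbf{A})$ is generated by $\zv \mapsto \mathbf{A}\zv$, two points $\nv,\nv' \in \N^A$ lie in the same fibre precisely when $\nv - \nv' \in \C$; hence $g$ is $\sigma(\mathbf{A})$-measurable if and only if $g(\nv + \cv) = g(\nv)$ for every $\cv \in \C$ and every $\nv$ with $\nv,\nv+\cv \in \N^A$.

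For the direct implication I would assume $g$ $\sigma(\mathbf{A})$-measurable and fix $\cv \in \C$, $f \in \BM^{\sharp}(\Z^A)$. Writing $\rho(f \circ \theta_{\cv}) = \pl\big(g\,(f\circ\theta_{\cv})\big)$, I would use the second observation to replace $g(\nv)f(\nv+\cv)$ by $g(\nv+\cv)f(\nv+\cv)$ on the support $\N^A$ of $\pl$: the two agree when $\nv+\cv \in \N^A$ by $\C$-invariance of $g$, and when $\nv+\cv \notin \N^A$ because $f \in \BM^{\sharp}(\Z^A)$ kills both sides. This gives the pointwise identity $g\,(f\circ\theta_{\cv}) = (gf)\circ\theta_{\cv}$, with $gf$ supported in $\N^A$. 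Applying \eqref{vshift} to $h = gf$ and then using $\pl(g\,\cdot) = \rho(\cdot)$ yields
\[
\rho(f\circ\theta_{\cv}) = \pl\big((gf)\circ\theta_{\cv}\big) = \lambda^{-\cv}\,\pl\big((gf)\,\GCC\big) = \frac{1}{\lambda^{\cv}}\,\rho(f\,\GCC),
\]
which is \eqref{eq:cycleshift}. Since $g$ need not be bounded, I would first extend \eqref{vshift} from bounded $f$ to nonnegative measurable $f$ supported in $\N^A$ by monotone convergence (all the sums involved being of nonnegative terms), and then split a general $f$ into positive and negative parts.

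For the converse I would assume \eqref{eq:cycleshift} and test it, for a fixed $\cv \in \C$, against the indicator $f = \1_{\{\mv\}}$ of an arbitrary point $\mv \in \N^A$. The left-hand side equals $\rho(\{\mv-\cv\})$ and the right-hand side equals $\lambda^{-\cv}\rho(\{\mv\})\GCC(\mv)$, so that
\[
g(\mv - \cv)\,\pl(\{\mv - \cv\}) = \frac{1}{\lambda^{\cv}}\,g(\mv)\,\pl(\{\mv\})\,\GCC(\mv).
\]
Inserting the explicit Poisson weights \eqref{mpoisson} one computes $\pl(\{\mv\})/\pl(\{\mv-\cv\}) = \lambda^{\cv}\prod_j (m^j-c^j)!/m^j!$, which cancels exactly against $\lambda^{-\cv}$ and against $\GCC(\mv) = \prod_j m^j!/(m^j-c^j)!$ (the indicators in \eqref{Gc} being $1$ as soon as $\mv,\mv-\cv \in \N^A$), leaving $g(\mv - \cv) = g(\mv)$. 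When instead $\mv-\cv \notin \N^A$ both sides vanish and no constraint arises. Letting $\mv$ range over $\N^A$ and $\cv$ over $\C$ gives precisely the $\C$-invariance of $g$ from the second observation, hence its $\sigma(\mathbf{A})$-measurability, and therefore $\rho \in \drecl$.

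I expect the only genuine subtlety to be the unboundedness of the density $g$ in the direct implication, which forces the monotone-convergence extension of \eqref{vshift} noted above; the converse is a transparent cancellation once single-point test functions are used, the whole identity hinging on the explicit form of the Poisson weights interlocking with $\GCC$ and $\lambda^{\cv}$.
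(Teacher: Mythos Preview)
Your proof is correct and follows essentially the same route as the paper: for the direct implication you use the $\C$-invariance of the density $g = d\rho/d\pl$ to turn $g\,(f\circ\theta_{\cv})$ into $(gf)\circ\theta_{\cv}$ and then apply \eqref{vshift}, and for the converse you test \eqref{eq:cycleshift} against single-point indicators. The only differences are cosmetic: you are more explicit about the automatic absolute continuity and the unboundedness of $g$ (which the paper glosses over), while the paper's converse avoids the explicit computation of Poisson weights by simply noting that the identical relation $\rho(\mv) = \lambda^{-\cv}\GCC(\nv)\rho(\nv)$ also holds with $\pl$ in place of $\rho$, forcing the densities to agree.
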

\begin{proof}
 ($\Rightarrow $)\,  Let $f \in \BM^{\sharp}(\Z^A)$ and $\cv \in \C$. By definition of $\C$ and $\drecl $ we can choose a version of the
density $h=\frac{d\rho}{d\pl}$ such that $h \circ \shc = h $. Applying the formula \eqref{vshift} we obtain:
\begin{eqnarray*}
\rho( f \circ \shc )  &=& \pl\left( (f \circ \shc) h\right) = \rho \left( (fh) \circ \shc  \right)\\
&=& \lambda^{-\cv} \pl \left( f \ \GCC  h\  \right) =  \lambda^{-\cv} \rho \left(  f \ \GCC\right)  
\end{eqnarray*}
($ \Leftarrow$) \, Let $\nv,\mathbf{m} \in \N^A$ be such that $\mathbf{A} \nv = \mathbf{A} \mathbf{m} $. Set $f := \mathds{1}_{\mathbf{n}}$, $\cv :=
\mathbf{n}- \mathbf{m}$. Then
\[
\rho(\mathbf{m}) = \rho(f \circ \shc) = \lambda^{-\cv} \GCC(\mathbf{n}) \rho(\mathbf{n}).
\]
Since, by \eqref{vshift}, the same relation holds under $\pl$, we have
\[
\frac{d\rho}{d\pl}(\mathbf{m}) = \frac{d\rho}{d\pl}(\mathbf{n}),
\]
which completes the proof.
\end{proof} 
\begin{example}\label{ex:dumbshift}
Resuming Example \ref{ex:shift}, we verify that, in this case,
$\C  =\left(
\begin{array}{c}
1\\
1\\
\end{array}
\right)  \Z .
$
Proposition \ref{prop:cycleshift} tells us that a  probability distribution $\rho$ on $\N^2$ satisfies
\begin{equation*}
 \rho(. \ |\npiu - \nmin = x) = \pl(. \ |\npiu - \nmin = x ) \ \ \forall x \in \Z
\end{equation*}
if and only if, for all $k$ in $\N^*$ and for all $f \in \BM^{\sharp}(\Z^2)$,
\begin{equation*} \label{Poik}
 \rho \Bigl( f (\nmin+k,\npiu+k) \Bigr)  = \frac{1}{(\lambda^{+} \lambda^{-})^{k}} \  \rho \Bigl( f (\nmin,\npiu) \prod_{i=0}^{k-1}(\nmin-i)(\npiu -i) \Bigr)
\end{equation*}
and
\begin{equation*} \label{Poik-}
 \rho \Bigl( f ( \nmin-k, \npiu-k ) \Bigr)  = (\lambda^{+} \lambda^{-})^{k} \  \rho \Bigl( f (\nmin,\npiu) \prod_{i=1}^{k}\frac{1}{(\nmin+i)(\npiu + i)} \Bigr).
\end{equation*}
\end{example}
\vspace{3mm}

Taking Proposition \ref{prop:cycleshift} as a characterization of $\drecl$  is rather unsatisfactory, since we do not exploit the lattice structure of $\C$.
It is then natural to look for improvements, by imposing \eqref{eq:cycleshift} to be satisfied only for shifts in a smaller subset of  $\C$, but still
characterizing $\drecl$. 
In particular, since $ \C $ is a sublattice of $\Z^A$, one wants to understand  if restricting to a basis suffices. 
 We answer affirmatively if $\C$ satisfy a
simple geometrical condition, see Proposition \ref{posray}. However, in general this is false, and we construct the Counterexample \ref{counterex}. \\
Before doing so, let us recall that  $\mathcal{L} \subseteq \R^A$ is a \textit{lattice} if there is a set of
linearly independent vectors $\{ b_1,..,b_l \}$ such that:
\begin{equation}\label{def:basis}
\mathcal{L} = \Big\{ \sum_{k=1}^{l} z_k b_k , \ z_k \in \Z, \  k =1,..,l \Big\}.
\end{equation}
Any set $\{z_1,...,z_l\}$ satisfying \eqref{def:basis} is a \textit{basis} for $\mathcal{L}$. Since any discrete subgroup of $\R^A$ is a
lattice (see e.g. Proposition 4.2 in \cite{NK}) then $\C$ is a lattice.\\
The equations \eqref{eq:cycleshift} essentially tell us that, if $\nv \in \N^A$ is such that $ \mv:=\theta_{-\cv} \nv$ is also an element of $\N^A$, then
$\rho(\mv ) = \rho(\nv) \pl(\mv)/\pl(\nv)$. If we let $\cv$ vary in a lattice basis it may happen that the ''network'' of relations constructed in this
way is not enough to capture the structure of  conditional probabilities. \ In the next paragraph, we will indeed reformulate this problem as a connectivity
problem for a certain family of graphs, and propose a solution in this framework using \textit{generating sets} of lattices. 
\begin{counterex}\label{counterex}
Let $\A = \left\{ 3,4,5 \right\}$. Then $$\C= \left\{ \cv \in \Z^3 : 3 c_1 + 4 c_2 + 5 c_3 =0 \right\}.$$ 
We define three vectors
\begin{equation*}
f=(-3,1,1) , \quad  g=(1,-2,1) , \quad h = (2,1,-2).
\end{equation*}
Note that $\{ f,g,h \} \subseteq \C$. We also define
\begin{equation*} 
\nv_f: = (3,0,0), \quad   \nv_g:= (0,2,0), \quad  \nv_h:= (0,0,2).
\end{equation*}
Moreover, we observe that 
\begin{equation}\label{2vertex}
\mbox{\textit{if, for some $\cv \in \C, \, \shc \nv_f \in \N^3 $ then $\cv=f$}}.
\end{equation}
This can be checked with a direct computation.
The analogous statement also holds for $g$ and $h$, i.e.  
$$
\shc \nv_g \in \N^3 \Rightarrow \cv= g,\quad \shc \nv_h \in \N^3 \Rightarrow \cv= h.
$$
Let us now consider any basis $\CST$ of $\C$. 
Since $\C$ is two dimensional, at least one vector, $f$ or $g$ or $h$, does not belong to $\CST$. We assume w.l.o.g that $f
\notin \CST$.
For any $0<\varepsilon < 1$, $\lambda \in \R^3_+$, we define the probability measure $\rho \in \PM(\N^3)$ as a mixture between the degenerate measure  
$\delta_{\nv_f}$ and $\pl$ as follows:
\begin{equation}\label{romix}
\rho = \varepsilon \delta_{\nv_f} + (1-\varepsilon) \pl .
\end{equation}
Note that  $\rho \notin \drecl $. Indeed any version of the density must be such that:
 \begin{equation*}
\frac{d \rho }{ d\pl} (\nv_f)= \frac{\varepsilon}{\pl(\nv_f)}+(1-\varepsilon), \quad \frac{d \rho }{ d\pl} (\theta_{\cv_f}\nv_f)= 1-\varepsilon .
\end{equation*}
But, on the other hand, identity \eqref{eq:cycleshift} is satisfied for any  $\cv \in \CST$. 
Let us pick any test function $f = \mathds{1}_{\{\zv = \bar{\nv}\}}$, where $\bar{\nv} \in \N^3$ and $\cv \in \CST$.
There are two possibilities:\\
- Either $\theta_{-\cv} \bar{\nv} \in \Z^3 \setminus \N^3$.
In this case \eqref{eq:cycleshift} is satisfied by $\rho$ because both sides of the equality are zero, 
the left side because $\theta_{-\cv} \bar{\nv} \notin \N^A, \rho(\N^A)=1$ and the right side because $\GCC(\bar{\nv})=0$. \\
- Or $\theta_{-\cv} \bar{\nv} \in \N^A$. In this case, thanks to \eqref{2vertex} and $f \notin \CST$ we have  
$\bar{\nv} \neq \nv_f$ and $\theta_{-\cv} \bar{\nv} \neq \nv_f$. Therefore, by \eqref{romix},
\begin{equation*}
\begin{split}
\rho(\mathds{1}_{\{ \shc \zv = \bar{\nv} \}}  ) & = \frac{\rho(\theta_{-\cv}\bar{\nv})}{\rho(\bar{\nv}) } \rho(  \mathds{1}_{ \left\{  \zv = \bar{\nv} \right\} } )=
\frac{\pl(\theta_{-\cv}\bar{\nv})}{\pl(\bar{\nv}) } \rho(  \mathds{1}_{ \left\{  \zv = \bar{\nv} \right\} } ) \\ & =   \lambda^{-\cv} \rho(  \mathds{1}_{ \left\{ \zv = \bar{\nv} \right\} } \GCC(\zv)) 
\end{split}
\end{equation*}
which is equivalent to  \eqref{eq:cycleshift}.\\
We thus obtain an example of a set $\A$ such that, for any $\lambda \in \R^3_{+}$ and  
any basis $\CST $ of $\C$ we can construct a probability measure $\rho$
which satisfies \eqref{eq:cycleshift} for $\cv \in \CST $ and $  f \in \BM^{\sharp}(\Z^A)$  but does not belong to $\drecl$.
\end{counterex}

\subsubsection{Generating sets of lattices and conditional distributions}

We  first  define the {\em foliation} that the lattice $\C$ induces on $\N^A$: given $\nv \in \N^A$, let define the leaf containing $\nv$ by 
\begin{equation}\label{foliation}
\LF:= \left\{\nv + \C \right\}  \cap \N^A .
\end{equation}
Fix now $\CS \subseteq \C$. $\CS$ induces a graph structure on each leaf (see e.g. \cite{NK}):

\begin{definition}
 For $\CS \subseteq \C$ and $ \nv \in \N^A$ we define  $\mathcal{G}(\LF,\CS)$ as the graph whose vertex  set is $\LF$ 
 and whose edge set is given by
$$ 
 \left\{ (\mathbf{m},\mathbf{m}') \in \LF \times \LF : \exists \  \cv \in \CS \ \textrm{ with } \  \mathbf{m} = \shc(\mathbf{m}') \right\} .
$$
\end{definition}
%
We are now ready to  introduce the notion of generating set for $\C$.
\begin{definition}\label{def: genset}
The set $\CS$ is a {\em generating set} for $\C$ if, for all $\nv \in \N^A$, 
$\mathcal{G}(\LF,\CS)$ is a connected graph for the graph structure we have just defined.
\end{definition}

 We now recall, following Chapters 10 and 11 of the recent book \cite{Koppe13} - in which an extensive study of generating sets for lattices is done -  
that there  exists  a finite generating set for any given lattice.  
%
Finding a generating set for a lattice $\C$ is indeed closely related to the algebraic problem of finding a set of generators for the \textit{lattice ideal}
associated to $\C \subseteq \Z^A$, see Lemma 11.3.3 in \cite{Koppe13}.\\

Any generating set contains a lattice basis, but in general it might be much larger. 
Figure 1 illustrates a case where $\{ \left(  2,  -4,  2  \right),\left(  0 , -5 , 4  \right) \}$, basis of $\C$, is not a generating set 
since the graph $\mathcal{G}(\LF,\CS)$ is not connected for $\nv =(6,  1 , 2)$.\\

Computing  explicitly generating sets is a hard task. We give here two simple conditions on $\C$ under which a lattice basis is also a generating set.
The proof is given in the Appendix.
\begin{figure}[h!]
\includegraphics[width=\linewidth]{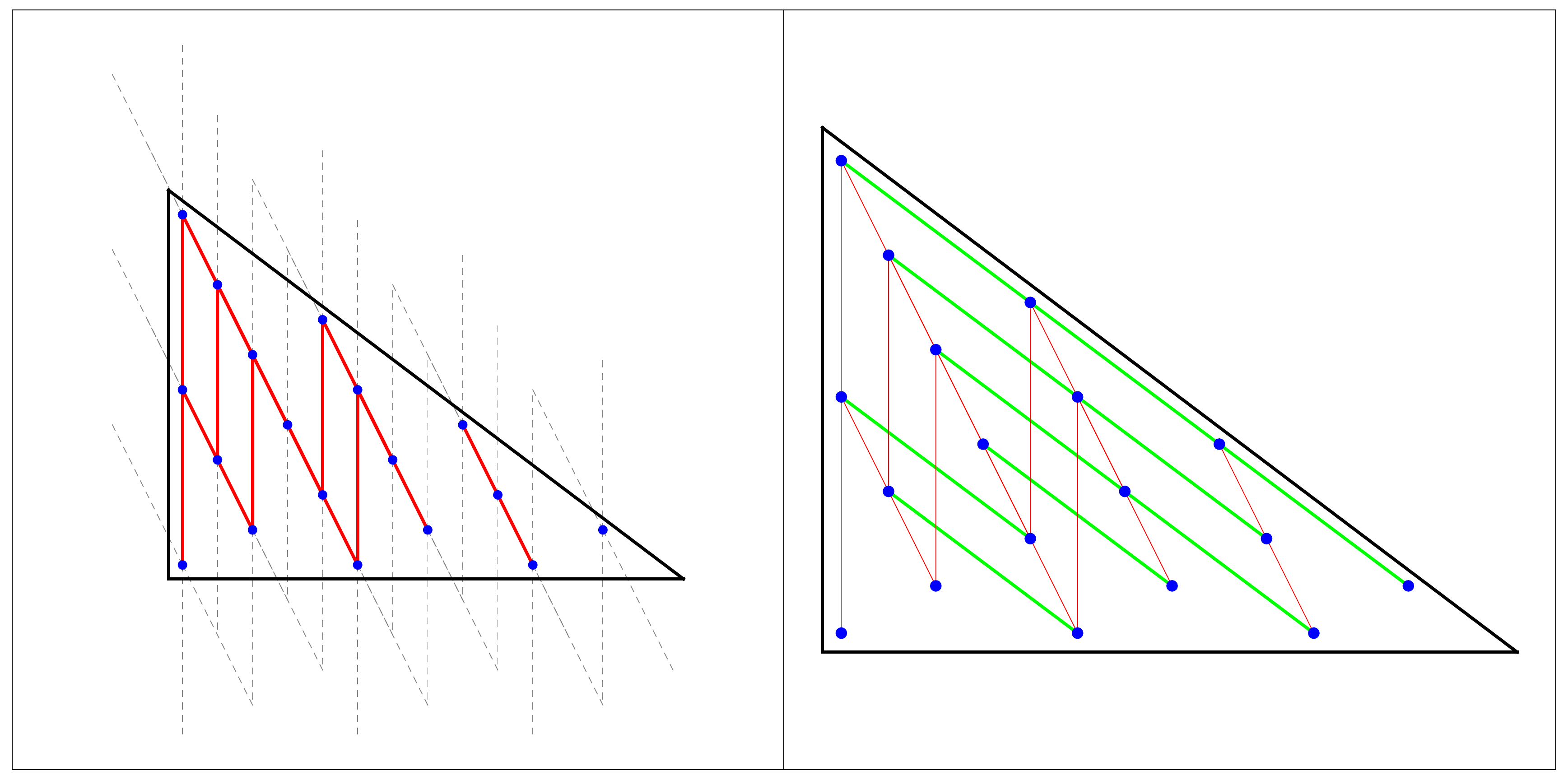}
\caption{$\A = \{ 3,4,5 \} $ and $\CST=\{ \left(  2,  -4,  2  \right),\left(  0 , -5 , 4  \right) \}$. 
Left: Projection on the $x_1 x_2 $ plane
of $G:=\mathcal{G}(\LF,\CST)$ for  $\nv =(6,  1 , 2)$. 
The red lines are the edges of $G$, while the dashed lines
represent edges that are not in $G$ because one endpoints does not belong to $\N^3$.
The graph $\mathcal{G}(\LF,\CST)$ has three connected components.
Right: Adding the vector $(4,-3,0) $ to $\CST$ turns $G$ into a connected graph.}
\end{figure}
\begin{prop}\label{posray}
Let $\CST$ be a basis of $\C$. Suppose that one of the following conditions holds:\\
$i$) The basis  $\CST$  contains an element $\bar{\cv}$ such that each coordinate  $\bar{c}^j, j=1,...,A$ is positive. \\
$ii$) Each vector of the basis  $\CST$ is an element of $ \N^A$.\\
Then, the basis $\CST$ is a generating set.
\end{prop}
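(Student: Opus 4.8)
The plan is to reduce the statement to a purely combinatorial claim about paths on each leaf, and then to exploit the sign structure of the basis vectors to keep those paths inside $\N^A$. First I would fix $\nv \in \N^A$ and recall that, by Definition \ref{def: genset}, $\CST$ is a generating set precisely when any two vertices $\mathbf{m},\mathbf{m}'$ of the leaf $\LF$ can be joined by a finite path in $\mathcal{G}(\LF,\CST)$, i.e. by a sequence of shifts $\pm\cv_k$ with $\cv_k\in\CST$, \emph{every intermediate point of which lies in $\LF$} — equivalently, in $\N^A$, since all these points automatically stay in the coset $\nv+\C$. Writing $\mathbf{m}'-\mathbf{m}=\sum_{k=1}^{l} z_k \cv_k$ (possible because $\CST$ is a basis of $\C$ and both points lie in the same leaf), the whole difficulty is that the partial sums obtained by applying the shifts in some order may leave $\N^A$. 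This is exactly the phenomenon exhibited by Counterexample \ref{counterex}, and the two hypotheses are designed to rule it out.

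Under hypothesis $ii$), every $\cv_k\in\N^A$, so \emph{adding} any basis vector to a point of $\N^A$ keeps us in $\N^A$. I would then split the indices into $P=\{k:z_k>0\}$ and its complement, and route both endpoints through the common ``upper'' vertex $\mathbf{w}:=\mathbf{m}+\sum_{k\in P} z_k\cv_k$. Indeed $\mathbf{w}$ is reached from $\mathbf{m}$ by successively adding the vectors $\cv_k$, $k\in P$ (only additions, hence every intermediate point lies in $\N^A$), and the same vertex is reached from $\mathbf{m}'$ by adding, for each $k\notin P$, the vector $\cv_k$ exactly $|z_k|$ times, because $\mathbf{m}'+\sum_{k\notin P}|z_k|\cv_k=\mathbf{m}+\sum_{k\in P} z_k\cv_k=\mathbf{w}$. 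Concatenating these two paths gives a path from $\mathbf{m}$ to $\mathbf{m}'$ lying entirely in $\N^A$, which proves connectedness.

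Under hypothesis $i$), let $\bar\cv=\cv_{\bar{k}}\in\CST$ have all coordinates strictly positive, with coefficient $z_{\bar{k}}$ in the expansion above. Here the argument is a \emph{lift, move, descend} scheme: since adding $\bar\cv$ strictly increases every coordinate, I first move from $\mathbf{m}$ to $\mathbf{m}+N\bar\cv$ for an arbitrarily large integer $N$ without leaving $\N^A$; then apply the remaining shifts $z_k\cv_k$, $k\neq\bar{k}$, one step at a time, reaching $\mathbf{m}+N\bar\cv+\sum_{k\neq\bar{k}}z_k\cv_k=\mathbf{m}'+(N-z_{\bar{k}})\bar\cv$; and finally descend to $\mathbf{m}'$ by subtracting $\bar\cv$ exactly $N-z_{\bar{k}}$ times, which stays in $\N^A$ because every intermediate point dominates $\mathbf{m}'\in\N^A$ coordinatewise. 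The point requiring care — and the \textbf{main obstacle} of this case — is the middle stage: each application of a $\pm\cv_k$ perturbs the coordinates by at most $\max_k\|\cv_k\|_\infty$, so choosing $N$ larger than $\bigl(\sum_k|z_k|\bigr)\max_k\|\cv_k\|_\infty$ (and $N\geq z_{\bar{k}}$) guarantees, since $\mathbf{m}+N\bar\cv$ has every coordinate $\geq N$, that all coordinates remain nonnegative throughout. As $\mathbf{m},\mathbf{m}'$ and $\nv$ were arbitrary, each hypothesis yields that $\mathcal{G}(\LF,\CST)$ is connected for every leaf, i.e. $\CST$ is a generating set.
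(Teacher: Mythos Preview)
Your proof is correct and follows essentially the same approach as the paper's: for $i)$ you use the same \emph{lift--move--descend} scheme (the paper lifts by $l\bar\cv$, applies the original sequence of shifts, then descends by $l\bar\cv$, with $l$ chosen from the minimal coordinate of the unlifted path rather than your $\|\cv_k\|_\infty$ bound), and for $ii)$ your ``meet at the upper vertex $\mathbf{w}$'' argument is exactly the paper's reordering of the positive shifts before the negative ones, read from both endpoints toward the middle. The only cosmetic difference is that in $i)$ you separate out the $\bar\cv$-component before the middle stage while the paper keeps it in; your implicit use of $\bar c^j\geq 1$ (to get every coordinate of $\mathbf{m}+N\bar\cv$ at least $N$) is justified since $\bar\cv\in\Z^A$ with strictly positive entries.
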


In the next theorem we show how one can use generating sets to characterize the set of probability measures $\drec$. 
Even though we are interested here in the case $\mu = \pl$ the statement is proven in a slightly more general context. 
In the same spirit as in \eqref{RadNik},
we consider the Radon-Nykodim decomposition of the image of $\mu$ by  $\shc$, $\mu \circ \shc^{-1} = \mu_{\cv}^{ac} + \mu^{sing}_{\cv}$, and the density
of $\mu^{ac}_{\cv}$ with respect to $\mu$:
\begin{equation}\label{RadNik2}
\quad G_{\cv}^{\mu} := \frac{d \mu^{ac}_{\cv} }{d \mu} .
\end{equation}
\begin{thm}\label{latinv}
Let   $\mathbf{A} \in \R^{d \times A}$ be any matrix and 
the lattice  $\C$ be defined as before by
$ \C:= \ker(\mathbf{A}) \cap \Z^A$ .
Assume that $\CS$ is a generating set of $\C$ and
let $\mu , \rho $ be two probability measures on $\N^A$. Suppose moreover that $\mu(\nv)>0$ for all $\nv \in \N^A$.\\
Then $ \rho \in \drec $ if and only if
\begin{equation}\label{hyp}
\forall \cv  \in \CS, \quad \rho ( f \circ \shc  ) = \rho (   f  \ G^{\mu}_{\cv} ) \quad \forall f \in \BM^{\sharp}(\Z^A),
 \end{equation}
 where $G^{\mu}_{\cv}$ is defined by \eqref{RadNik2}.
\end{thm}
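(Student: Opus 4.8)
The plan is to reduce both implications to the measure-theoretic characterisation recalled in the text, namely that $\rho \in \drec$ if and only if $\rho \ll \mu$ with a density $h := d\rho/d\mu$ that is $\sigma(\mathbf{A})$-measurable, i.e. constant on each leaf $\LF$ of \eqref{foliation}. The first thing I would record is that $\rho \ll \mu$ holds automatically here: since $\mu(\nv) > 0$ for every $\nv \in \N^A$ and $\rho$ is carried by $\N^A$, the density $h(\nv) = \rho(\nv)/\mu(\nv)$ is well defined on $\N^A$. I would also make explicit the shape of the weight $G^{\mu}_{\cv}$: unwinding the Radon--Nikodym decomposition \eqref{RadNik2} exactly as was done for $\pl$ gives
\[
G^{\mu}_{\cv}(\nv) = \frac{\mu(\theta_{-\cv}\nv)}{\mu(\nv)}\,\1_{\{\theta_{-\cv}\nv \in \N^A\}},
\]
which reduces the whole statement to an assertion purely about $h$.

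For the implication $\rho \in \drec \Rightarrow$ \eqref{hyp}, I would argue as in the proof of Proposition \ref{prop:cycleshift}: $\sigma(\mathbf{A})$-measurability lets me pick a version of $h$ with $h(\theta_{-\cv}\nv) = h(\nv)$ whenever $\nv$ and $\theta_{-\cv}\nv$ both lie in $\N^A$ and $\cv \in \C$, in particular for $\cv \in \CS$. Writing $\rho(f \circ \shc) = \mu\big((f\circ\shc)\,h\big)$, substituting $\mv = \shc\nv$, and using the explicit form of $G^{\mu}_{\cv}$ together with $h(\theta_{-\cv}\mv) = h(\mv)$ then turns the left-hand side into $\rho(f\,G^{\mu}_{\cv})$ by a direct change of summation variable. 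This direction is routine and does not use that $\CS$ generates $\C$; it in fact holds for every single $\cv \in \C$.

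The substantial direction is the converse. Assuming \eqref{hyp}, I would test it against the single-point indicators $f = \1_{\{\zv = \bar{\nv}\}} \in \BM^{\sharp}(\Z^A)$, for $\bar{\nv} \in \N^A$. The left-hand side evaluates to $\rho(\theta_{-\cv}\bar{\nv})$ and the right-hand side, via the explicit $G^{\mu}_{\cv}$, to $h(\bar{\nv})\,\mu(\theta_{-\cv}\bar{\nv})\,\1_{\{\theta_{-\cv}\bar{\nv}\in\N^A\}}$; comparing and cancelling the strictly positive factor $\mu(\theta_{-\cv}\bar{\nv})$ yields $h(\bar{\nv}) = h(\theta_{-\cv}\bar{\nv})$ whenever $\bar{\nv}$ and $\theta_{-\cv}\bar{\nv}$ both sit in $\N^A$. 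This says precisely that $h$ takes equal values at the two endpoints of every edge of the graph $\mathcal{G}(\LF,\CS)$. Since $\CS$ is a generating set, Definition \ref{def: genset} guarantees that $\mathcal{G}(\LF,\CS)$ is connected for every $\nv$, so $h$ is constant along each leaf; as the leaves are exactly the level sets of $\zv \mapsto \mathbf{A}\zv$, the density $h$ is $\sigma(\mathbf{A})$-measurable and $\rho \in \drec$.

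The main obstacle is exactly this last chaining step, and it is where the improvement over Proposition \ref{prop:cycleshift} lives: with only the generators $\CS$ at my disposal I can compare $h$ at two points of a leaf \emph{only} when they differ by a single $\cv \in \CS$, whereas the earlier proof could take $\cv = \nv - \mv$ for arbitrary $\nv, \mv$ in the same leaf because it used the full lattice $\C$. Upgrading these edgewise identities to equality of $h$ across an entire leaf is possible only because connectivity of $\mathcal{G}(\LF,\CS)$ lets me propagate the equality along a path of generators --- precisely the content of ``generating set''. The care needed is to ensure the indicator test functions stay in $\BM^{\sharp}(\Z^A)$ and that every intermediate vertex of the connecting path lies in $\N^A$, which is built into the definition of the graph.
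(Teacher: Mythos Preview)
Your proposal is correct and follows essentially the same route as the paper: the forward implication is handled exactly as in Proposition \ref{prop:cycleshift}, and for the converse you test \eqref{hyp} against single-point indicators to obtain the edgewise identity $h(\bar{\nv}) = h(\theta_{-\cv}\bar{\nv})$, then use connectivity of $\mathcal{G}(\LF,\CS)$ to propagate this along a path in the leaf. The paper phrases the same computation as $\rho(w_{k-1})/\rho(w_k) = \mu(w_{k-1})/\mu(w_k)$ and telescopes along the path, which is just your argument rewritten in terms of ratios rather than the density $h$.
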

\begin{proof}
($\Rightarrow $) goes along the same lines of Proposition \ref{prop:cycleshift} since $\CS \subseteq \C$.\\

($\Leftarrow$) 
\, Let $\nv,\mv \in \N^A$ be such that $\mathbf{A} \nv = \mathbf{A} \mathbf{m} $ and assume that $\rho(\nv)>0 $. 
Then $\mathbf{m} \in \LF$ (see \eqref{foliation}). 
Since $\CS$ is a generating set for $\C$ there exists a path from $\mathbf{m}$ to $\nv$ included in $\mathcal{G}(\LF,\CS)$ i.e. 
there exists $\cv_1,...,\cv_K \in \CS$ such that, if we define recursively:
\begin{equation*}
w_0 = \mathbf{m}, \quad w_k = \theta_{\cv_k} w_{k-1},
\end{equation*}
then $w_k \in \N^A$ for all $k $ and $w_K=\nv$. \ We can choose $f^k= \mathds{1}_{\left\{ \zv = w_k  \right\} }$ 
and apply \eqref{hyp} for $\cv = \cv_k$:
\begin{equation*}
\rho(w_{k-1}) = \frac{\mu(w_{k-1})}{\mu(w_k)} \rho(w_{k}) 
\end{equation*}
which, since $\mu$ is a positive probability on $\N^A$, offers   an inductive proof that $\rho(w_{k})>0$. 
Therefore one obtains
$$
\frac{\rho(\mathbf{m})}{\rho(\nv)}  = \prod_{k=1}^{K}\frac{\rho(w_{k-1})}{\rho(w_{k})} 
= \prod_{k=1}^{K}\frac{\mu(w_{k-1})}{\mu(w_{k})} = \frac{\mu(\mathbf{m})}{\mu(\nv)}
$$
which is equivalent to
$d \rho / d \mu \ (\nv) = d\rho/ d \mu \ (\mathbf{m}),
$ 
which completes the proof.
\end{proof}

As consequence of Theorem \ref{latinv}, we obtain the following statement, which improves Proposition \ref{prop:cycleshift}.
\begin{cor}\label{prop:GenChen}
Let $ \rho \in \mathcal{P}(\N^A) $ and $\CS$ be a generating set of $\C$ defined by \eqref{def:C}.
Then  $\rho \in \drecl $ if and only if
\begin{equation}\label{csetChen}
  \forall \cv \in \CS, \quad \rho \left( f \circ \shc \right) =
  \frac{1}{\lambda^{\cv}} \rho \left( f  \ \GCC \right),\quad \forall f \in  \BM^{\sharp}(\Z^A),
\end{equation}
where $\GCC$ is defined in \eqref{Gc}.
\end{cor}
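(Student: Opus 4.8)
The plan is to obtain this statement as a direct specialization of Theorem \ref{latinv} to the case $\mu = \pl$. By definition $\drecl$ is precisely $\drec$ with $\mu = \pl$, and $\CS$ is assumed to be a generating set of $\C$, so the structural hypotheses of Theorem \ref{latinv} are already in force. It therefore suffices to verify the one remaining hypothesis of that theorem — strict positivity of the reference measure — and to identify the density $G^{\pl}_{\cv}$ occurring in \eqref{hyp} with the explicit expression $\frac{1}{\lambda^{\cv}}\GCC$.

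For the positivity, I would simply read off from \eqref{mpoisson} that
\begin{equation*}
\pl(\nv) = \exp\Big(-\sum_{j=1}^A \lambda^j\Big) \prod_{j=1}^A \frac{(\lambda^j)^{n^j}}{n^j!} > 0 \qquad \forall \nv \in \N^A,
\end{equation*}
which holds as soon as every coordinate $\lambda^j$ is strictly positive; this is exactly the condition $\mu(\nv) > 0$ required in Theorem \ref{latinv} with $\mu = \pl$. The identification of the density is then read off from the Radon-Nikodym decomposition \eqref{RadNik} already computed for $\pl$: taking $\vv = \cv$ there, the absolutely continuous part $\pl^{\cv,ac}$ has density
\begin{equation*}
G^{\pl}_{\cv}(\nv) = \frac{d\pl^{\cv,ac}}{d\pl}(\nv) = \lambda^{-\cv}\prod_{j=1}^A \frac{n^j!}{(n^j-c^j)!}\mathds{1}_{\{n^j \geq c^j\}} = \frac{1}{\lambda^{\cv}}\GCC(\nv),
\end{equation*}
where the last equality is just the definition \eqref{Gc} of $\GCC$ together with $\lambda^{\cv} = \prod_{j=1}^A (\lambda^j)^{c^j}$.

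Substituting $\mu = \pl$ and $G^{\pl}_{\cv} = \frac{1}{\lambda^{\cv}}\GCC$ into the equivalence \eqref{hyp} of Theorem \ref{latinv} turns condition \eqref{hyp} into condition \eqref{csetChen}, which is the claim. Since the whole combinatorial and graph-theoretic content has already been absorbed into Theorem \ref{latinv} — through the connectivity of $\mathcal{G}(\LF,\CS)$ along a generating set — there is no genuine obstacle remaining at this stage. The only points demanding care are the strict positivity of the Poisson weights, which forces $\lambda^j > 0$ for every $j$, and the bookkeeping matching $G^{\pl}_{\cv}$ with $\frac{1}{\lambda^{\cv}}\GCC$; both are immediate from the formulas established just before \eqref{Gc}.
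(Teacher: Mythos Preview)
Your proposal is correct and is exactly the intended argument: the paper presents this corollary simply as a consequence of Theorem \ref{latinv}, and your write-up spells out the two points that need checking, namely the strict positivity $\pl(\nv)>0$ and the identification $G^{\pl}_{\cv}=\lambda^{-\cv}\GCC$ from the computation preceding \eqref{Gc}. There is nothing to add.
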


\begin{example}\label{ex:clevershift}
 We continue Examples \eqref{ex:shift} and \eqref{ex:dumbshift}, illustrating Corollary \ref{prop:GenChen}.
 For  given $\lambda^{-}, \lambda^{+} $ the probability measure $\rho $ belongs to $\drecl $ if and only if 
\begin{equation*}
  \rho \left( f(\nmin+1,\npiu+1) \right) = \frac{1}{\lambda^{-} \lambda^{+}} \   \rho \left( f(\nmin,\npiu) \npiu \nmin \right) 
  \quad \forall f \in \BM^{\sharp}(\Z^2).
 \end{equation*}
 This improves Example \ref{ex:dumbshift}, where we obtained a redundant characterization.
\end{example}
\section{Characterization of the reciprocal class} \label{sec:char}

\subsection{Main result}
We present here our main result: the reciprocal class $\Rec (\nu)$ associated to a compound Poisson process with jump measure $\nu$  is characterized as 
the set of all probabilities for which a family of transformations induces the same density, expressed in terms of the reciprocal invariants.
We have already introduced in the previous section the family of  reciprocal time-invariants. 
Let us now introduce the family of reciprocal space-invariants.
\begin{definition}
Let $\nu$ be a jump measure in  $\smch$ as defined in \eqref{ eq: charact}. 
For any $ \cv \in \C$ we call \textbf{reciprocal space-invariant} $\sinv$ the positive number 
\begin{equation*}
\sinv := \prod_{j=1}^{A} \left(\int_{0}^{1} \nu^{j}(t) dt \right)^{-c^j} .
\end{equation*}
 \end{definition}

\begin{remark}
In the time homogeneous case, $\nu^j_t \equiv \nu^j $,
$ \sinv = 1/  \prod_{j=1}^{A} ( {\nu^j} )^{c^j}$.
\end{remark}

We can now use these invariants to characterize the reciprocal class.

\begin{thm}\label{thm:char}
Let  $\nu \in \smch$ and $\QR\in\PM(\Omega) $. Then $\QR$ belongs to the reciprocal class $\Rec(\nu) $ if and only if
\begin{itemize}
\item[$i$)] For all $u \in \U$ and all $F \in \BM(\OO)$,
 \begin{equation}\label{eq:udns}
  \QR \Bigl( F \circ \pi_{u} \Bigr)  =\QR \Bigl( F \,  \GT \Bigr).
 \end{equation} 
\item[$ii$)] There exists a generating set $\CS \subseteq \C$ such that for every $\cv \in \CS$ and every $f \in \BM^{\sharp}(\Z^A)$, 
 the following identity holds:
\begin{equation}\label{eq:cdns}
\rho \Bigl(f \circ \shc  \Bigr)  = \sinv \,\  \rho  \Bigl( f \ \GCC \Bigr),
\end{equation}
where  $ \rho:= \QR \circ \NO^{-1 }\in \mathcal{P}(\N^A) $ is the law of $\NO$ under $\QR$.
\end{itemize}
\end{thm}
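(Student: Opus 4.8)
The plan is to read both conditions through Proposition \ref{prop:Q=hP}, according to which $\QR\in\Rec(\nu)$ is equivalent to $\QR\ll\PN^{\QR}$ with a $\sigma(X_0,X_1)$-measurable density, and to exploit the factorisation of a path into the triple (initial position $X_0$, terminal count $\NO=\mathbf{N}_1$, jump times given $\NO$). The single most useful preliminary fact is that, evaluating \eqref{laptransf} at $s=0$, $t=1$, the vector $\NO$ is multivariate Poisson $\pl$ with $\lambda^j=\int_0^1\nu^j(t)\,dt$ under every $\PNX$, hence under $\PN^{\QR}$; therefore $\sinv=\lambda^{-\cv}$, and by Corollary \ref{prop:GenChen} assertion $ii$) is exactly the statement $\rho\in\drecl$, i.e. $\rho\ll\pl$ with a $\sigma(\mathbf{A})$-measurable density.

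For the direct implication I would start from the mixture representation $\QR=\int_{\s}\PNB\,\QR_{01}(dxdy)$. Assertion $i$) is inherited from Proposition \ref{prop:udns}: since $\pi_u$ fixes both $X_0$ and $\NO$ while $\GT$ is a functional of the jump times only, conditioning \eqref{eq: udns} on $\{\NO=\nv\}$ produces a fibrewise identity that passes to each bridge $\PNB$ (itself a mixture over $\{\nv:\mathbf{A}\nv=y-x\}$ of these conditional laws) and then, by linearity, to $\QR$. For assertion $ii$) I would use Proposition \ref{prop:Q=hP} to write $d\QR/d\PN^{\QR}=h(X_0,X_0+\mathbf{A}\NO)$ and project onto $\NO$: integrating against $\QR_0$ gives $d\rho/d\pl(\nv)=\int h(x,x+\mathbf{A}\nv)\,\QR_0(dx)$, which depends on $\nv$ only through $\mathbf{A}\nv$, so $\rho\in\drecl$ by the analogue of Proposition \ref{prop:Q=hP} recorded for $\drec$, and Corollary \ref{prop:GenChen} yields \eqref{eq:cdns}.

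For the converse I would disintegrate $\QR$ along $(X_0,\NO)$. Testing $i$) against functions of the form $g(X_0)\,\mathds{1}_{\{\NO=\nv\}}\,F$ and using again that $\pi_u$ preserves $X_0$ and $\NO$, I obtain that the conditional law of the jump times given $(X_0,\NO)=(x,\nv)$ satisfies the same time-change functional equation as the compound Poisson bridge. Once this conditional law is identified with that of $\PNX$ (the delicate point, see below), the density $d\QR/d\PN^{\QR}$ ceases to depend on the jump times and reduces to a function $\Psi(X_0,\NO)$. Assertion $ii$), via the preliminary remark and Corollary \ref{prop:GenChen}, then constrains the law of $\NO$ to lie in $\drecl$, which is what forces the $\NO$-dependence of $\Psi$ to factor through $\mathbf{A}\NO=X_1-X_0$; the density is thus $\sigma(X_0,X_1)$-measurable and Proposition \ref{prop:Q=hP} gives $\QR\in\Rec(\nu)$.

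The hard part will be the uniqueness hidden in the converse: read conditionally on a fixed count $\nv$, equation \eqref{eq:udns} must admit the jump-time law of the bridge as its only solution. The route I would follow is to apply, type by type, the uniformising time change $t\mapsto\int_0^t\nu^j/\lambda^j$; in these coordinates $\tinv$ becomes identically $1$ and \eqref{eq:udns} degenerates into invariance under every orientation-preserving diffeomorphism of $[0,1]$ fixing $0$ and $1$. Because such diffeomorphisms act transitively on the open simplex of $n^j$ ordered times, the only invariant probability is the law of the order statistics of independent uniform variables, which is precisely the image of the bridge law. Turning this transitivity heuristic into a proof—establishing the requisite absolute continuity, and discarding the synchronisation null set noted after Definition \ref{def:timepert} on which $\pi_u$ may leave $\OO$—is where the genuine work lies.
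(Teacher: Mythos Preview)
Your forward direction is correct and essentially coincides with the paper's; the paper works directly with the density $h(X_0,X_1)=d\QR/d\PN^{\QR}$ from Proposition~\ref{prop:Q=hP} rather than through the bridge mixture, but this is only packaging. For the converse, your route to $\sigma(X_0,\NO)$-measurability is genuinely different: the paper does \emph{not} try to identify the conditional jump-time law by a uniqueness argument on the simplex. Instead it first proves $\QR\ll\PN^{\QR}$ by a dyadic covering estimate (relegated to the Appendix), using \eqref{eq:udns} with suitably built $u$ to transport mass between dyadic cells and obtain a uniform bound $\QR(O^m_{\K}\mid\NO=\nv)\le C\,\PN^{\QR}(O^m_{\K}\mid\NO=\nv)$; once absolute continuity is in hand, it shows $H\circ\pi_u=H$ for $H=d\QR/d\PN^{\QR}$ and reads off $\sigma(X_0,\NO)$-measurability. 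Your transitivity heuristic is cleaner in spirit but, as you note, does not by itself produce absolute continuity: the quasi-invariance relation $(u^{-1})_*\mu=J_u\,\mu$ is compatible with singular $\mu$, and the dyadic bound is exactly the missing device.

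The substantive gap is in your last step. From $\rho\in\drecl$ you infer that $\Psi(X_0,\NO)$ depends on $\NO$ only through $\mathbf{A}\NO$; this fails when $\QR_0$ is non-degenerate, because condition $ii$) constrains only the $\QR_0$-average $\nv\mapsto\int\Psi(x,\nv)\,\QR_0(dx)$, not each $\Psi(x,\cdot)$. Concretely, take $\A=\{-1,1\}$, time-homogeneous $\nu^-=\nu^+=1$ (so $\pl(0,0)=\pl(1,1)=e^{-2}$), $\QR_0=\tfrac12(\delta_0+\delta_1)$, and set $\Psi\equiv 1$ except $\Psi(0,(0,0))=\Psi(1,(1,1))=2$, $\Psi(0,(1,1))=\Psi(1,(0,0))=0$. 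Then $d\QR:=\Psi(X_0,\NO)\,d\PN^{\QR}$ is a probability, it satisfies $i$) because $\Psi(X_0,\NO)\circ\pi_u=\Psi(X_0,\NO)$, and one checks $\rho=\pl$, so $ii$) holds; yet $\QR(\NO=(1,1)\mid X_0=0,X_1=0)=0$ while $\PR_{\nu}^{00}(\NO=(1,1))>0$, hence $\QR\notin\Rec(\nu)$. What your Step~3 actually needs is \eqref{eq:cdns} for the \emph{conditional} laws $\rho^x=\QR(\NO\in\cdot\mid X_0=x)$, and indeed the paper's own Step~3 invokes Corollary~\ref{prop:GenChen} for $\rho^x$ rather than for $\rho$; but neither your sketch nor the paper explains how the marginal hypothesis $ii$) delivers the conditional one, and the example above shows that as stated it does not.
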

\vspace{3mm}

\begin{remark}\label{anytime}
Note that identities similar to \eqref{eq:cdns} hold for any $t \in ]0,1]$, i.e. any $\QR \in \Rec(\nu)$ satisfies (we assume a time homogeneous $\nu$, for
simplicity):
\begin{equation}\label{eq:ctdns}
\QR \left( f \circ \shc (\mathbf{N}_t) \right) = \sinv \ (1-t)^{-|\cv|}\ \QR \left(  (f \GCC ) (\mathbf{N}_t) \right), \quad \forall f \in
\BM^{\sharp}(\Z^A), 0<t \leq 1,
\end{equation}
where $|\cv| := \sum_{j =1}^Ac^j$.
However, the identities \eqref{eq:ctdns} do not contain enough information to characterize the reciprocal class as the time-invariants do not appear.
\end{remark}

\begin{proof}
($\Rightarrow$) Let $\QR \in \Rec(\nu)$ and $\PN^{\QR}$  be constructed as in Proposition \ref{prop:Q=hP}. Since there is no ambiguity, we
write $\PN$ rather than $\PN^{\QR}$. An application of the same proposition gives that $\QR << \PN$, and $h:= \frac{d\QR}{d\PN}$ is 
$\sigma(X_0,X_1)$-measurable.
Consider now  $ u \in \U $. By definition of $u$, for any $j$,
$N^j_{1} \circ \pi_u = N^j_{1}$, 
so that $(X_0,X_1) \circ \pi_u = (X_0,X_1), \, \PN \ps .$
 
We then consider $F \in \BM(\OO)$  and 
apply Proposition \ref{prop:udns} under the measure $\PN$, which leads  to 
\begin{eqnarray*}
\QR \Bigl( F \circ \pi_u \Bigr)&=& \PN \Bigl( (F \circ \pi_u) h(X_0,X_1)   \Bigr)  
= \PN \Bigl( (Fh(X_0,X_1)) \circ \pi_u \Bigr)\\ &=& \PN \Bigl( Fh(X_0,X_1) \GT \Bigr) \\ &=& \QR \Bigl(F \GT \Bigr).
\end{eqnarray*}
In a similar way, if $\cv \in \CS$, since $\CS \subseteq \C $ we have that $\mathbf{A}(\shc \NO) = \mathbf{A}\NO$. We observe that  $\PN ( \NO \in .| X_0=x) = \pl $, where
\begin{equation}\label{ident}
\lambda^j:= \int_{0}^{1} \nu^{j}(t) dt .
\end{equation}
For $f \in \BM^{\sharp}(\Z^A)$ and $\cv \in \CS $ we use Proposition \ref{prop:cycleshift}, observing that $\NO$ has law $\pl$ and is independent of $X_0$,
to obtain
\begin{eqnarray*}
\rho \Bigl(f \circ \shc  \Bigr)  &=& 
\QR \Bigl( f \circ   \shc (\NO )  \Bigr) \\
&=& \PN \Bigl( h(X_0,X_1) \ f\circ \shc \circ \NO  \Bigr) \\ &=& 
\PN \Bigl( h(X_0,X_0+\mathbf{A}( \shc \NO)) f \circ \shc \circ \NO  \Bigr) \\
&=& \PN\Bigl( \PN^{X_0}\Bigl(  h(X_0,X_0+\mathbf{A}(\shc \NO)) f \circ \shc \circ \NO \Bigr) \Bigr)    \\
 &=& \sinv \  \PN\Bigl( h(X_0,X_1) (f\GCC ) \circ \NO  \Bigr)= \ \sinv \ \rho  \Bigl( f \ \GCC \Bigr)
\end{eqnarray*}
and $ii$) is now proven.\\

($\Leftarrow$) 
We will show that $\QR$ satisfies $ii$) of Proposition \ref{prop:Q=hP}, which is equivalent to $\QR \in \Rec(\nu)$. We divide the proof in three steps. 
In a first step, we refer to the Appendix for the proof of the absolute continuity  of $\QR$ w.r.t. to
$\PN^{\QR}$, since it is quite technical. 
In a second step we prove that the density is $\sigma(X_0,\NO)$-measurable and in a third one we prove that this density is indeed 
$\sigma(X_0,X_1)$-measurable. 
For sake of clarity, since there is no ambiguity, we denote by $\PN$ the probability $\PN^{\QR}$.\\
{\it Step 1: Absolute continuity}.\\
 See the Appendix. \\
{\it Step 2: The density $H:= \frac{ d \QR}{d \PN}$ is invariant under time change.}\\
We show that, for any $u \in \U$, $H$ is $\pi_u$-invariant, i.e. $H \circ \pi_u = H \ \PN \ps$. 
Since $\PN(\Omega)=1$ we have that $\pi_u$ is $\PN \ps$ invertible.  Applying the identity \eqref{eq:timepert}  under $\PN$ 
we obtain, for any $F \in \BM(\Omega)$:
\begin{equation*}
\begin{split}
\PN \left(  F \,  H \circ \pi_u \right) &= \PN \left( (F \circ \pi_u^{-1} \, H)  \circ \pi_u \right)\\								
				&= \PN \Big( F \circ \pi_u^{-1} \, H \,  \GT \Big)\\
				&= \QR\Big(  F \circ \pi_u^{-1} \, \GT  \Big)\\
&= \QR \left( F \right) = \PN \left(  F\, H  \right)
\end{split}
\end{equation*}
which gives us the desired invariance, since $F$ is arbitrary.\\
We claim that this implies that $H$ is $\sigma(X_0,\NO)$-measurable, i.e. that there exists a function $h: \R \times \N^A \longrightarrow \R^+$  such that 
\begin{equation*}
 H = \frac{ d \QR}{d \PN} = \frac{d\QR \circ (X_0,\NO)^{-1}}{d\PN  \circ (X_0,\NO)^{-1}} = h(X_0,\NO)  \quad \PN \ps .
\end{equation*}
 This is true since, given any two $ \om, \om' \in \Omega$  with the same initial state and the same number of jumps of each type, 
 one can construct $u \in \U$ such that $\om' = \pi_u (\om)$. \\
{\it{Step 3}: The density $H$ is invariant under shifts in $\CS$.} \\
Let us recall that $\PN (\NO \in . | X_0 = x) = \pl$, where $\lambda$ is given by \eqref{ident}. Under our assumption we might apply Corollary
\ref{prop:GenChen} 
to $\pl = \PN ( \NO \in. |X_0=x)$ and $\rho^x = \QR (\NO \in . |X_0=x )$. 
We obtain that the conditional density $ \frac{d \rho^x}{d\pl} $ is  $\mathbf{A}\NO $-measurable $\QR_{0} \ \ps$ and, 
by mixing over the initial condition, that
 $\frac{d\QR \circ (X_0,\NO)^{-1}}{d\PN  \circ (X_0,\NO)^{-1}} = \frac{d\QR}{d\PN}$ is $\sigma(X_0,\mathbf{A}\NO) = \sigma(X_0,X_1)$-measurable.
\end{proof}

\subsection{Comparing reciprocal classes through invariants}

In what follows and in the next subsections, we consider jump measures $ \nu \in \smch$ which are time-homogeneous. In that case  we identify $\nu$ with the
vector  \\ $(\nu^1(0),...,\nu^{A}(0)) \in \R^{A}_{+}$. \\
We present  in Proposition \ref{geometricchar} a set of explicit necessary and sufficient conditions for two compound Poisson processes $\PNX$ and
$\mathbb{P}^x_{\tilde{\nu}}$ to have the same bridges, or equivalently, to belong to the same reciprocal class. 
A more implicit condition has been presented by R. Murr in his PhD thesis, see \cite{Murr12}, Proposition 7.50. 
Our result is in the spirit of \cite{LMR}, where two counting processes where shown 
to have the same bridges if and only if their reciprocal (time-)invariants coincide. \\

In the next proposition we denote by $\C^{\bot}$ the orthogonal complement of the affine hull of $\C$, and the logarithm  of the vector $\nu \in \R^{A}_{+}$, denoted by $\log(\nu)$,
has to be understood componentwise. 

\begin{prop}\label{geometricchar}
Let $x \in \RD$, $\nu, \tilde{\nu} \in \R^{A}_{+} $ and $\CST$ be a lattice basis of $\C$.
The following assertions are equivalent:
\begin{itemize}
\item[$i$)] $ \mathbb{P}^{x}_{\tilde{\nu}} \in \Rec(\nu)$.
\item[$ii$)] For every $\cv \in \C^{*}$ the equality $\Phi_{\nu}^{\cv} = \Phi_{\tilde{\nu}}^{\cv}$ holds.
\item[$iii$)] There exists $v \in \C^{\bot} $ such that $ \log (\tilde{\nu}) = \log(\nu) + v $.
\end{itemize}
\end{prop}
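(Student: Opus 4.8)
The plan is to run the cycle $(i)\Leftrightarrow(ii)\Leftrightarrow(iii)$, handling the first equivalence through the characterization in Theorem~\ref{thm:char} and the second by elementary linear algebra after passing to logarithms. The key simplification is that, both $\nu$ and $\tilde{\nu}$ being time-homogeneous, the candidate $\QR=\mathbb{P}^{x}_{\tilde{\nu}}$ is itself a time-homogeneous compound Poisson process, so I can test its membership in $\Rec(\nu)$ directly against the two conditions of Theorem~\ref{thm:char}.

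First I would observe that condition $i)$ of Theorem~\ref{thm:char} (the time-change equation) is automatically satisfied here. Since $\nu$ is time-homogeneous, $\tinv\equiv 1$, so $\log\tinv\equiv 0$ and the density $\GT$ reduces to $1$; thus \eqref{eq:udns} only demands invariance of $\QR$ under every $\pi_u$. This invariance holds for $\mathbb{P}^{x}_{\tilde{\nu}}$ by Proposition~\ref{prop:udns} applied with $\tilde{\nu}$ in place of $\nu$, whose own time-invariant is likewise trivial. Hence membership in $\Rec(\nu)$ is governed \emph{entirely} by the space-shift condition $ii)$.

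Next I would reduce that space-shift condition. Under $\mathbb{P}^{x}_{\tilde{\nu}}$ the random vector $\NO$ has the multivariate Poisson law $\mathfrak{p}_{\tilde{\nu}}$, since $\int_0^1\tilde{\nu}^j(t)\,dt=\tilde{\nu}^j$. Applying the shift formula \eqref{vshift} to $\mathfrak{p}_{\tilde{\nu}}$ gives $\rho(f\circ\shc)=\Phi_{\tilde{\nu}}^{\cv}\,\rho(f\,\GCC)$ for every $\cv\in\C$ and every $f\in\BM^{\sharp}(\Z^A)$. Comparing this with the requirement \eqref{eq:cdns}, namely $\rho(f\circ\shc)=\sinv\,\rho(f\,\GCC)$, the two identities coincide for all test functions precisely when $\sinv=\Phi_{\tilde{\nu}}^{\cv}$. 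I would make the nontrivial direction rigorous by choosing $f$ supported on the (nonempty) set where $\GCC>0$, so that $\rho(f\,\GCC)>0$ and the common factor may be cancelled. Because $\cv\mapsto\log\Phi_{\nu}^{\cv}$ is \emph{linear}, the equality $\Phi_{\nu}^{\cv}=\Phi_{\tilde{\nu}}^{\cv}$ over a generating set $\CS$ is equivalent to its holding over a basis, and hence over all of $\C$; this establishes $(i)\Leftrightarrow(ii)$.

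Finally, for $(ii)\Leftrightarrow(iii)$ I would take logarithms. Using $\Phi_{\nu}^{\cv}=\prod_{j}(\nu^j)^{-c^j}$, the equality $\Phi_{\nu}^{\cv}=\Phi_{\tilde{\nu}}^{\cv}$ reads $\cv\cdot(\log\tilde{\nu}-\log\nu)=0$. Imposing this for every $\cv$ in a basis of $\C$ is the same as imposing it for every $\cv$ in the linear span of $\C$, i.e. $\log\tilde{\nu}-\log\nu\perp\mathrm{span}_{\R}(\C)$; by the definition of $\C^{\bot}$ as the orthogonal complement of the affine hull of $\C$ this is exactly $\log\tilde{\nu}-\log\nu\in\C^{\bot}$, which is $(iii)$ with $v:=\log\tilde{\nu}-\log\nu$. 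The only delicate point in the whole argument is the clean reduction to Theorem~\ref{thm:char}: checking that the time-change equation is vacuous for time-homogeneous data, and that the space-shift identity for $\mathfrak{p}_{\tilde{\nu}}$ truly forces equality of the two reciprocal space-invariants rather than merely of the underlying integrals. Everything past that reduction is linear algebra.
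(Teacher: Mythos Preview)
Your proposal is correct and follows essentially the same route as the paper: both arguments reduce $(i)\Leftrightarrow(ii)$ to Theorem~\ref{thm:char}, observe that the time-change equation \eqref{eq:udns} is vacuous in the time-homogeneous case, compare the space-shift identity \eqref{eq:cdns} against the multivariate Poisson shift formula for $\mathfrak{p}_{\tilde\nu}$ to force $\Phi_{\nu}^{\cv}=\Phi_{\tilde\nu}^{\cv}$, and then pass from a basis to all of $\C$ by the linearity of $\cv\mapsto\log\Phi_{\nu}^{\cv}$; the equivalence $(ii)\Leftrightarrow(iii)$ is handled identically by taking logarithms. Your write-up is in fact a bit more explicit than the paper's in two places: you spell out why \eqref{eq:udns} holds for $\mathbb{P}^{x}_{\tilde\nu}$ via Proposition~\ref{prop:udns}, and you justify the cancellation of $\rho(f\,\GCC)$ by choosing $f$ supported where $\GCC>0$.
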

\begin{proof}
 $i) \Rightarrow ii) $ By applying  \eqref{eq:cdns} and  the trivial fact that $\PR_{\tilde{\nu}}^x \in \Rec(\tilde{\nu})$, we have
\begin{equation}
 \Phi_{\tilde{\nu}}^{\cv} \, \PR_{\tilde{\nu}}^{x} (f) = \PR_{\tilde{\nu}}^{x} (f \circ \shc \circ \NO ) = \Phi_{\nu}^{\cv} \, \PR_{\tilde{\nu}}^{x} (f ) , \quad \forall f \in
\BM^{\sharp}(\Z^A),
\end{equation} 
and $ii)$ follows.\\
 $ii) \Rightarrow i)$
Observe that since $\C^{*}$ is a lattice basis, any $\cv \in \C$ can be written as an integer combination of the elements of $\CST$, i.e. 
$\cv= \sum_{\cv^{*} \in \CST} z_{\cv^{*}} \cv^{*}$, $z_{\cv^*} \in \Z$. Therefore all the reciprocal space-invariants coincide since
 \begin{equation}
 \Phi_{\nu}^{\cv} = \prod_{\cv^* \in \CST} (\Phi^{\cv^*}_{\nu})^{z_{\cv^*}} = \prod_{\cv^* \in \CST} (\Phi^{\cv^*}_{\tilde{\nu}})^{z_{\cv^*}} = \Phi_{\tilde{\nu}}^{\cv},
\quad \forall \cv \in \C.
 \end{equation}
 With a similar argument as above one proves that the identity \eqref{eq:cdns} is satisfied under $\PR_{\tilde{\nu}}^x$. 
 The functional equation \eqref{eq:udns} is trivially satisfied by $\PR_{\tilde{\nu}}^x$ because $  \Xi^{\nu} \equiv \Xi^{\tilde{\nu}} =1$. 
 The conclusion follows by applying  Theorem \ref{thm:char}.\\
$ii) \Leftrightarrow iii) $ We just observe that the equality $\sinv = \Phi^{c}_{\tilde{\nu}}$ is equivalent to
\begin{equation*}
 \sum_{j=1}^{A}\log(\nu^j) c^j  = \sum_{j=1}^{A}\log(\tilde{\nu}^j) c^j .
\end{equation*} 
Since a lattice basis $\CST$ of $\C$ is a linear basis of the affine hull of $\C$ $ii)$   is equivalent to the fact that $\log(\nu)$ and $\log(\tilde{\nu})$ 
have the same projection onto $\C$,
which is equivalent to $iii)$.
 \end{proof}
\begin{example}
Continuing on Example \ref{ex:clevershift}, two time-homogeneous compound Poisson processes  with jumps in $\A=\{-1,1\}$ and rate $\nu= (\nu^- ,\nu^+)$
resp. $\tilde{\nu}=( \tilde{\nu}^-,\tilde{\nu}^+)$ have the same bridges if and only if
\begin{equation*}
\nu^- \nu^+ = \tilde{\nu}^-\tilde{\nu}^+ .
\end{equation*}
\end{example}
\begin{example}
Let $\A = \{ -1,3 \}$ and define two time-homogeneous compound Poisson processes  with jumps in $\A$ and rate $\nu= (\nu^- ,\nu^+)$
resp. $\tilde{\nu}=( \tilde{\nu}^-,\tilde{\nu}^+).$
They have the same bridges if and only if
\begin{equation*}
(\nu^-)^3 \nu^+= (\tilde{\nu}^-)^3\tilde{\nu}^+ .
\end{equation*}
\end{example}

\begin{example}\label{hex}
Let $\A=\{a^1,..., a^6 \}$ be the vertices of an hexagon, see the Figure 2: 
\begin{equation}
a^i = \big(\cos(\frac{2\pi}{6} (i-1) ) , \sin(\frac{2 \pi}{6} (i-1))\big) \in \R^2,\quad  i=1,...,6.
\end{equation}
Then a basis of $\C$ is:
\begin{equation}
\CST = \{ \mathbf{e}_1 + \mathbf{e}_4, \mathbf{e}_2 + \mathbf{e}_5 , \mathbf{e}_1 + \mathbf{e}_3 + \mathbf{e}_5, \mathbf{e}_2 + \mathbf{e}_4 + \mathbf{e}_6  \}.
\end{equation}
By Proposition \ref{geometricchar},  $\PNX$ with jump rates $\nu=(\nu^1,...,\nu^6)$ belongs to $ \mathfrak{R}(\tilde{\nu})$ if and only if 
\begin{equation*}
\begin{cases}
\nu^1 \nu^4 =  \tilde{\nu}^1\tilde{\nu}^4,\\
  \nu^2 \nu^5 =  \tilde{\nu}^2 \tilde{\nu}^5,\\
   \nu^1 \nu^3 \nu^5 = \tilde{\nu}^1 \tilde{\nu}^3 \tilde{\nu}^5,\\
    \nu^2 \nu^4 \nu^6 = \tilde{\nu}^2 \tilde{\nu}^4 \tilde{\nu}^6
    \end{cases}
\end{equation*}

\end{example}

\begin{figure}[h!] \label{fig:hexagon}
\centering
\includegraphics[height=3.5cm, width=\linewidth]{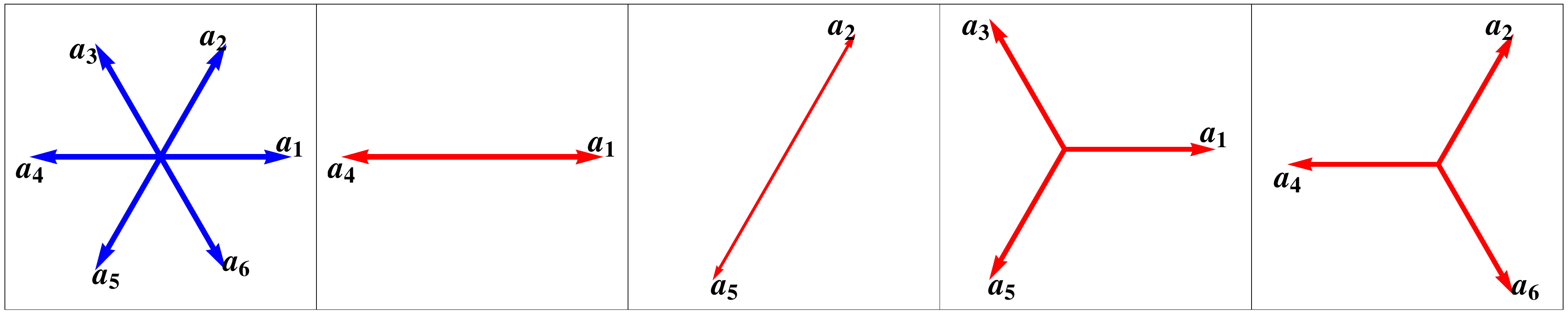}
\caption{A representation of the vectors of $\A$ and of the incidence vectors of  $\CST$}
\end{figure}

\vspace{4mm}
\subsection{An interpretation of the reciprocal space-invariants}

We aim at an interpretation of the space-invariants for  a time-homogeneous jump measure $ \nu  \in \smch $ under the geometrical assumption 
$ii$) of Proposition \ref{posray}:
\begin{equation}\label{geoass}
\C \ admits \  a \  lattice \  basis \  \CST \ included \ in \  \N^A  .
\end{equation}
A lattice basis satisfying \eqref{geoass} is a generating set for $\C$. 
Therefore it is sufficient to interpret the invariants  $\Phi^{\cv}_{\nu}$ for $\cv \in \CST$.\\
Assumption \eqref{geoass} is not only  natural in view  of the interpretation we will give in  Proposition \ref{loopint} but it 
 is satisfied in many interesting situations. One can prove that this is the case when $\A \subseteq \Z$ and $\A$ contains at least 
 one negative and one positive jump. 
Assumption \eqref{geoass} also holds in
several situations when $d>1$, e.g. in the setting of Example \ref{hex}.\\
In the context of diffusions, various physical interpretation of the reciprocal invariants have been given, 
mainly based on analogies with Stochastic Mechanics, see \cite{CruZa91}, \cite{LeKre93}, \cite{Th93} and \cite{TZ97}. 
Regarding jump processes, the only
interpretation known to the authors was given by R. Murr \cite{Murr12}. 
Inspired by  \cite{PrZamb04} he related  the reciprocal time-invariant associated to a counting process 
(the space-invariants trivialize) with a stochastic
control problem, whose cost function is expressed in terms of the invariant.

We propose here a different interpretation of the invariants as infinitesimal characteristics, 
based on  the short-time expansions for the probability that the process makes a cycle around its current state. We believe this interpretation to be
quite general, and we are currently working on various extensions.

\vspace{0.1cm}

To be precise, let us  define the concept of cycle we use here. In the rest of this section, a basis $\CST $ satisfying \eqref{geoass} is fixed.
\begin{definition}
A cycle is a finite sequence $\cy:=( x_k )_{0 \leq k \leq l}$ such that
\begin{itemize}
\item[i)] \  $x_k - x_{k-1} \in \A , \ \ \  1 \leq k \leq l$,
\item[ii)]  $ x_l=x_0=0 $.
\end{itemize}
\end{definition}

To each cycle $\cy$ we can associate an element $\mathbf{N}(\cy) \in \C \cap \N^A$ by counting how many times each jump occurred in the cycle, thus neglecting the order at which they occurs:
\begin{equation*}\label{unord}
\mathbf{N}(\cy)^j := \sharp  \{ k: x_k - x_{k-1}  = a^j \}, \quad  1\leq j \leq A,
\end{equation*}  
where $\sharp E$ denotes the number of elements of a finite set $E$.
Note that, for a given $\cv \in \C $, we can construct a cycle $\cy$ such that $\mathbf{N}(\cy) = \cv$ if and only if $\cv \in \N^A$. 
Therefore, under assumption \eqref{geoass},   $\mathbf{N}^{-1}(\cv)$ is non empty  for any $\cv \in \CST$.  

\begin{figure}[h!]
\centering
\includegraphics[height=4.2cm, width=\linewidth]{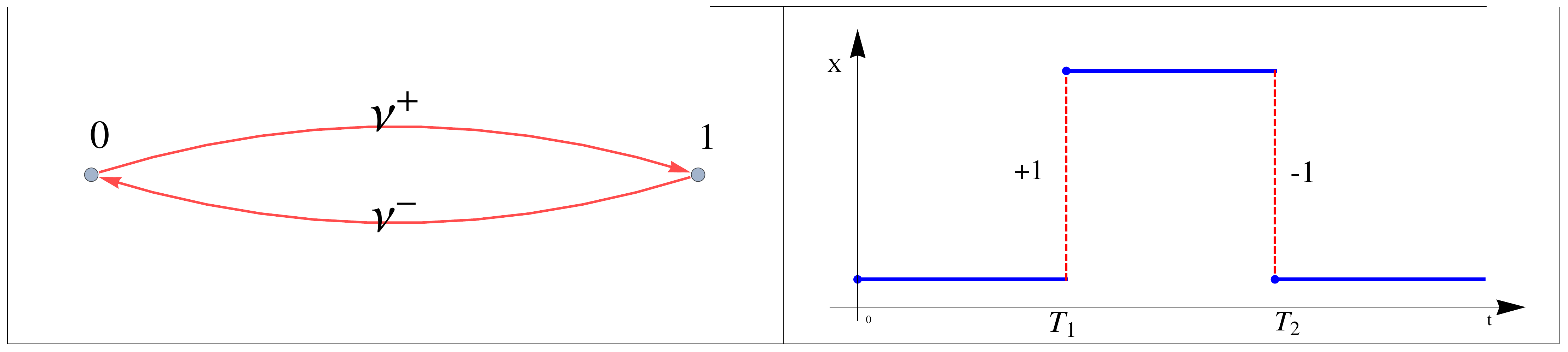}
\caption{Here $\A = \{ -1,1 \}$ and $\C= \left( 1,1 \right) \Z $. 
Left:  A representation of the cycle $\gamma = \{ 0,1,0\}$ satisfying $\mathbf{N}(\gamma)=\left( 1,1 \right)$.
 Right: A typical path in $L_{\varepsilon}^{\gamma}$. 
The probability of $L_{\varepsilon}^{\gamma}$ is equivalent  to $(\nu^+ \nu^-)\varepsilon^2 $ over the whole reciprocal class, as $\varepsilon \rightarrow 0$. 
}
\end{figure}

\vspace{0.1cm}
\begin{definition}
We define the trace $\gamma_{\varepsilon}(\om)$ of a path $\om \in \OO$ as the ordered sequence formed by the displacements
from the initial position up to time $\varepsilon$:
\begin{equation*}
 \trc (\om) = (0,X_{T_1}-X_0,...,X_{T_{|N|_{\varepsilon}}}-X_0) .
\end{equation*}
The subset of paths whose trace coincides with a given cycle $\cy$ over a small time interval $[0,\varepsilon]$ is denoted by
\begin{equation*}
L_{\varepsilon}^{\cy} :=  \{ \om: \trc (\om) = \cy \} .
\end{equation*}
\end{definition}

Finally, we introduce the usual time-shift operator on the canonical space:
\begin{equation*}
\tau_{t}: \mathbb{D}([0,1],\RD) \longrightarrow  \mathbb{D}([0,1-t],\RD), \quad \tau_{t}(\om)_s = \om_{t+s}, \forall \  0 \leq s \leq 1-t .
\end{equation*}

The following short-time expansion holds under the compound Poisson process. 
\begin{prop}\label{fwdloop}
Let $\nu \in \smch$ be a time-homogeneous jump measure, $x, x_0 \in \RD$. 
Then for any time $t\geq 0$, $\cv \in \CST$ and  any cycle $\cy$ with $ \mathbf{N} (\cy)=\cv$,  we have:
\begin{equation*}\label{refshtime}
\PN^{x_0}(\tau_{t}(X) \in L_{\varepsilon}^{\cy}| X_t = x) = \frac{1}{\sinv \  |\cv|!}  \varepsilon^{|\cv|} + o(\varepsilon^{|\cv|}) \quad  as \  \varepsilon
\rightarrow 0 
\end{equation*}
where $|\cv| = \sum_{j=1}^{A} c^j$.
\end{prop}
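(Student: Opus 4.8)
The plan is to reduce the conditional probability to a short-time computation for the \emph{unconditioned} reference process, and then to carry out that computation explicitly using the independence and stationarity of increments of the compound Poisson process.

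First I would dispose of the conditioning and the time-shift by invoking the Markov property. Since $\PN^{x_0}$ is a time-homogeneous compound Poisson process, the increments $(X_{t+s}-X_t)_{s \geq 0}$ are independent of $\mathcal{F}_t$, hence of $X_t$, and themselves form a compound Poisson process started at $0$ with the same rates. Consequently, conditionally on $\{X_t=x\}$ (an event of positive probability whenever $x-x_0\in\mathbf{A}\N^A$), the shifted process $\tau_t(X)$ has law $\PN^x$. Moreover $L_{\varepsilon}^{\cy}$ depends only on the restriction of the path to $[0,\varepsilon]$, and it is invariant under spatial translation because $\trc$ records only displacements from the initial position. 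Therefore
\begin{equation*}
\PN^{x_0}\bigl(\tau_t(X)\in L_{\varepsilon}^{\cy}\mid X_t=x\bigr)=\PN^x(L_{\varepsilon}^{\cy})=\PN^0(L_{\varepsilon}^{\cy}),
\end{equation*}
so that the quantity depends neither on $t$, nor on $x$, nor on $x_0$.

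Next I would compute $\PN^0(L_{\varepsilon}^{\cy})$ directly. By definition $\om\in L_{\varepsilon}^{\cy}$ means that the process makes exactly $l:=|\cv|$ jumps in $[0,\varepsilon]$ and that the ordered sequence of these jumps is precisely $(x_1-x_0,\dots,x_l-x_{l-1})$, the successive displacements prescribed by $\cy$. Writing $\Lambda:=\sum_{j=1}^A\nu^j$ for the total jump rate, the jump times form a homogeneous Poisson process of rate $\Lambda$, so the probability of exactly $l$ jumps in $[0,\varepsilon]$ is $e^{-\Lambda\varepsilon}(\Lambda\varepsilon)^l/l!$; conditionally on their number the jump types are i.i.d.\ with $a^j$ chosen with probability $\nu^j/\Lambda$, hence the prescribed ordered sequence of types has probability $\prod_{k=1}^l(\nu^{j_k}/\Lambda)$. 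Since $\cy$ uses the jump $a^j$ exactly $c^j=\mathbf{N}(\cy)^j$ times, this product equals $\Lambda^{-l}\prod_{j=1}^A(\nu^j)^{c^j}$, and multiplying the two contributions gives
\begin{equation*}
\PN^0(L_{\varepsilon}^{\cy})=e^{-\Lambda\varepsilon}\,\frac{\varepsilon^{l}}{l!}\,\prod_{j=1}^A(\nu^j)^{c^j}.
\end{equation*}

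Finally I would expand as $\varepsilon\to 0$. Using $e^{-\Lambda\varepsilon}=1+O(\varepsilon)$ and $l=|\cv|$, the right-hand side equals $\tfrac{\varepsilon^{|\cv|}}{|\cv|!}\prod_{j=1}^A(\nu^j)^{c^j}+o(\varepsilon^{|\cv|})$, and recalling that in the time-homogeneous case $\sinv=1/\prod_{j=1}^A(\nu^j)^{c^j}$ this is exactly $\tfrac{1}{\sinv\,|\cv|!}\varepsilon^{|\cv|}+o(\varepsilon^{|\cv|})$, which is the claim. I do not expect a serious obstacle: the only points requiring care are the positivity of the conditioning event and the translation-invariance used in the reduction, together with the bookkeeping that the factor $1/|\cv|!$ comes from the Poisson count of jumps while the product $\prod_j(\nu^j)^{c^j}$ comes from the ordered choice of jump types. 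The reciprocal invariant $\sinv$ then appears simply because it is, by definition, the reciprocal of that product of rates.
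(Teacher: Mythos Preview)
Your proof is correct and follows essentially the same route as the paper: reduce to $t=0$ via the Markov property, then compute $\PN^x(L_\varepsilon^{\cy})$ by factoring into the Poisson probability of exactly $|\cv|$ jumps on $[0,\varepsilon]$ times the probability that the ordered jump types match the cycle, obtaining $e^{-\bar\nu\varepsilon}\tfrac{\varepsilon^{|\cv|}}{|\cv|!}\prod_j(\nu^j)^{c^j}$ and identifying the product of rates with $1/\sinv$. The only cosmetic difference is that you add the translation-invariance step to pass from $\PN^x$ to $\PN^0$, which the paper leaves implicit.
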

\begin{proof}
First observe that w.l.o.g. $t=0$, the general result following from  the Markov property of $\PN^{x_0}$. 
For simplicity, we denote by $\bar \nu$ the total jump rate $\sum_{j=1}^{A} \nu^j$. Moreover, we denote by $j(k)$ the unique element of $\{ 1,...,A\}$ such that $X_{T_k}-X_{T_{k-1}} = a^{j(k)}$. With an elementary computation based on the explicit distribution of $\PN^{x_0}$:
\begin{eqnarray*}
\PN^{x}( L_{\varepsilon}^{\cy} ) 
&= &\PN^{x}\Big(\{ |N|_{\varepsilon} = |\cv| \}  \cap \bigcap_{k=1}^{|\cv|}\{ X_{T_k}-X_{T_{k-1}} = a^{j(k)} \} \Big) \\
&=& \exp(- \varepsilon \bar \nu ) \frac{( \varepsilon \bar \nu)^{|\cv|}}{|\cv|!} \prod_{k=1}^{|\cv|} \frac{\nu^{j(k)}}{\bar \nu} 
= \exp(- \varepsilon \bar \nu) \varepsilon^{|\cv|} \prod_{j=1}^{A} (\nu^j)^{\sharp \{ k :  j(k) = j \}}
\\ &=& \exp(- \varepsilon \bar \nu) \frac{\varepsilon^{|\cv|}}{|\cv|!} \prod_{j=1}^{A} (\nu^j)^{c^j}
= \exp(- \varepsilon \bar \nu ) \frac{1}{\sinv \,  |\cv|!}  \varepsilon^{|\cv|}
\end{eqnarray*}
from which the conclusion follows.
\end{proof}
Even more interesting, the same time-asymptotics holds under any $\QR \in \Rec(\nu)$ and in particular under any bridge $\PNB$.
\begin{prop}\label{loopint}
Let $\nu \in \smch$ be a time-homogeneous jump measure and $\QR \in \Rec(\nu)$. 
Then for any time $t\geq 0$, $\cv \in \CST$ and  any  cycle $\cy$ with $ \mathbf{N} (\cy)=\cv$,  we have:
\begin{equation*}
\QR \ps \quad  \QR \Bigl( \tau_t(X) \in L_{\varepsilon}^{\cy} \Big| X_{t} \Bigr)  =  \frac{1}{\sinv \ |\cv|!}  \varepsilon^{|\cv|} + o(\varepsilon^{|\cv|}) \quad  as \  \varepsilon \rightarrow 0 
\end{equation*}
\end{prop}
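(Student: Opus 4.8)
The plan is to reduce the statement, valid for an arbitrary $\QR \in \Rec(\nu)$, to the reference short-time asymptotics already obtained in Proposition \ref{fwdloop}, by exploiting the defining reciprocal property \eqref{ref-class}. Exactly as in that proposition, the Markov property allows me to take $t=0$ without loss of generality, so I set $E_{\varepsilon} := \{\tau_t(X) \in L_{\varepsilon}^{\cy}\}$, an event of $\mathcal{F}_{[t,t+\varepsilon]}$, and I record the crucial geometric fact: since $\mathbf{N}(\cy)=\cv \in \C=\ker_\Z(\mathbf{A})$, the net displacement along the cycle is $\mathbf{A}\cv=0$, so that, given $X_t=x$, the event $E_{\varepsilon}$ forces $X_{t+\varepsilon}=x$; in particular $E_{\varepsilon}\subseteq\{X_{t+\varepsilon}=X_t\}$.

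First I would apply \eqref{ref-class} to the inner event $E_{\varepsilon}$ with outer times $t$ and $t+\varepsilon$, which gives $\QR(E_{\varepsilon}\,|\,\mathcal{F}_{[0,t]\cup[t+\varepsilon,1]}) = \PN^{x_0}(E_{\varepsilon}\,|\,X_t,X_{t+\varepsilon})$, the reference conditional being independent of the starting point by the Markov property. Taking $\QR(\cdot\,|\,X_t)$ of both sides through the tower property, and using that $\PN^{x_0}(E_{\varepsilon}\,|\,X_t=x,X_{t+\varepsilon}=y)$ vanishes for $y\neq x$ while for $y=x$ it equals $\PN^{x_0}(E_{\varepsilon}\,|\,X_t=x)/\PN^{x_0}(X_{t+\varepsilon}=x\,|\,X_t=x)$ (because, given $X_t=x$, the event already forces $X_{t+\varepsilon}=x$), I would obtain, for $\QR_t$-a.e. $x$, the key identity
\[
\QR(E_{\varepsilon}\,|\,X_t=x) = \frac{\QR(X_{t+\varepsilon}=x\,|\,X_t=x)}{\PN^{x_0}(X_{t+\varepsilon}=x\,|\,X_t=x)}\ \PN^{x_0}(E_{\varepsilon}\,|\,X_t=x).
\]
Here I rely on the fact that $X_{t+\varepsilon}$ carries a genuine atom at $x$ under both measures, so that no conditioning degeneracy arises and the denominator is bounded below by $\exp(-\varepsilon\sum_{j}\nu^j)>0$.

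It then suffices to insert the reference expansion $\PN^{x_0}(E_{\varepsilon}\,|\,X_t=x)=\tfrac{1}{\sinv\,|\cv|!}\varepsilon^{|\cv|}+o(\varepsilon^{|\cv|})$ from Proposition \ref{fwdloop} and to check that the prefactor ratio tends to $1$ as $\varepsilon\to 0$. Both ``no-move'' probabilities converge to $1$: for the reference process this is immediate from its explicit law, and for $\QR$ it follows because a path in $\OO$ has $\QR$-a.s. finitely many, hence isolated, jump times, so that $\1_{\{X_{t+\varepsilon}=X_t\}}\to 1$ $\QR$-a.s. as $\varepsilon\to 0$; a conditional dominated convergence argument then yields $\QR(X_{t+\varepsilon}=X_t\,|\,X_t)\to 1$ $\QR\ps$. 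Consequently the ratio converges to $1$ for $\QR_t$-a.e. $x$, and the claimed expansion holds $\QR\ps$.

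I expect the last step to be the main obstacle, since $\QR$ is only a mixture of bridges and admits no explicit formula. The delicate point is to justify $\QR(X_{t+\varepsilon}=X_t\,|\,X_t)\to 1$ $\QR\ps$; concretely one must verify that the fixed time $t$ is $\QR$-a.s. not a jump time, so that the jump times stay bounded away from $t$ on the right, which is precisely where the regularity of the bridges of the compound Poisson process (finiteness of the jump count and absence of jumps at a deterministic instant) enters. Once this is established, the conditional convergence is routine, and the reduction to $t=0$ is legitimate because $E_{\varepsilon}$ is measurable with respect to the post-$t$ evolution.
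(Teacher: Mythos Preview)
Your argument is correct and reaches the same conclusion as the paper, but by a genuinely different route. One small confusion: you open by saying that ``the Markov property allows me to take $t=0$'' and close by calling the ``reduction to $t=0$'' legitimate, yet $\QR$ is not Markov, and in fact your argument never performs this reduction---you work at a general $t$ throughout. This is harmless, just inconsistent.

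The paper's proof proceeds differently. It first fixes $\QR_0=\delta_{x_0}$ and invokes Proposition~\ref{prop:Q=hP} to write $d\QR/d\PN^{x_0}=h(X_1)$. It then evaluates $\PN^{x_0}\!\bigl(\1_{E_\varepsilon}h(X_1)\,\big|\,X_t\bigr)$ in two ways using the Markov property of the reference process and the cycle identity $X_{t+\varepsilon}=X_t$ on $E_\varepsilon$, obtaining
\[
\QR(E_\varepsilon\,|\,X_t)\,\PN^{X_t}\!\bigl(h(X_{1-t})\bigr)=\PN^{x_0}(E_\varepsilon\,|\,X_t)\,\PN^{X_t}\!\bigl(h(X_{1-(t+\varepsilon)})\bigr),
\]
and concludes by continuity of $s\mapsto\PN^{X_t}(h(X_{1-s}))$ together with Proposition~\ref{fwdloop}. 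Your approach bypasses the density $h$ entirely: you apply the reciprocal property \eqref{ref-class} on the interval $[t,t+\varepsilon]$ to obtain the clean factorization
\[
\QR(E_\varepsilon\,|\,X_t=x)=\frac{\QR(X_{t+\varepsilon}=x\,|\,X_t=x)}{\PN(X_{t+\varepsilon}=x\,|\,X_t=x)}\,\PN(E_\varepsilon\,|\,X_t=x),
\]
and then show the ratio tends to $1$. Your route is more structural and avoids the reduction to a deterministic initial point; the price is that you must justify $\QR(X_{t+\varepsilon}=X_t\,|\,X_t)\to 1$ $\QR\ps$, which you correctly identify as the delicate step. Note that conditional dominated convergence alone gives $L^1$ convergence; to get the almost sure statement you should use monotonicity via the smaller events $\{\text{no jump in }(t,t+\varepsilon]\}$, which increase to a set of full $\QR$-probability since each bridge $\PNB$ inherits from $\PNX$ the absence of a jump at the deterministic time $t$. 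The paper, by contrast, hides the analogous limiting step in the asserted continuity of $\PN^{X_t}(h(X_{1-\cdot}))$.
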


\begin{proof}

Assume that $\QR \in \Rec(\nu)$. Observe that w.l.o.g we can assume that $\QR_{0} = \delta_{x_0}$ for some $x_0 \in \RD$, 
the general result following by mixing over the initial condition.  
Then by Proposition \ref{prop:Q=hP},  $d\QR /d\PN^{x_0} = h(X_1)$. We first show the identity:
\begin{equation}\label{deccondexp}
\PN^{x_0}\Big( \mathds{1}_{\{\tau_t(X) \in L_{\varepsilon}^{\cy} \} } h(X_1) \Big| X_t \Big) = 
\QR \Big( \mathds{1}_{ \{ \tau_t(X) \in L_{\varepsilon}^{\cy} \} } |X_{t} \Big)  \PN^{X_t}\Big( h(X_{1-t}) \Big) .
\end{equation}
Indeed, let us take any test function of the form $\mathds{1}_{\{X_t \in A\}}$. We have:
\begin{equation*}
\begin{split}
\PN^{x_0}\Big( \mathds{1}_{ \{ \tau_t(X) \in L_{\varepsilon}^{\cy} \} }  \ h(X_1) &  \mathds{1}_{\{X_t \in A\}}  \Big)  =  \QR\Big( \mathds{1}_{ \{ \tau_t(X) \in L_{\varepsilon}^{\cy} \} } \  \mathds{1}_{\{ X_t \in A\} } \Big) \\
\vspace{0.2cm}
&=  \QR( \  \QR(\mathds{1}_{ \{ \tau_t(X) \in L_{\varepsilon}^{\cy} \}} |X_t ) \    \mathds{1}_{\{ X_t \in A\} } ) \\
\vspace{0.2cm}
 &= \PN^{x_0} \Big( \  \QR(\mathds{1}_{ \{ \tau_t(X) \in L_{\varepsilon}^{\cy} \}} |X_t ) \    h(X_1) \mathds{1}_{\{ X_t \in A\} } \Big)  \\
\vspace{0.2cm} 
 &= \PN^{x_0} \Big( \ \QR( \mathds{1}_{ \{ \tau_t(X) \in L_{\varepsilon}^{\cy} \}} |X_t )  \ \PN^{x_0}(h(X_1) |X_t )   \    \mathds{1}_{\{ X_t \in A\} } \Big)
 \end{split}
\end{equation*}
from which \eqref{deccondexp}  follows. Consider now the left hand side of \eqref{deccondexp}. 
We have, by applying the Markov property and the fact that $\gamma$ is a cycle:
\begin{eqnarray*}
\PN^{x_0} \Big(  h(X_1)  \ \mathds{1}_{\{\tau_t(X) \in L_{\varepsilon}^{\cy} \} }  |X_t \Big)
 &=& \PN^{x_0} \Big(   \PN^{x_0}( h(X_1) | \mathcal{F}_{[t,t+\varepsilon]} )  \mathds{1}_{\{ \tau_t(X) \in L_{\varepsilon}^{\cy} \} }    | X_t \Big) \\
\vspace{0.2cm}
 &=& \PN^{x_0} \Big(  \PN^{X_{t+\varepsilon}} (h(X_{1-(t+\varepsilon) })    )  \ \mathds{1}_{\{\tau_t(X) \in L_{\varepsilon}^{\cy} \}  }   |X_t \Big) \\
\vspace{0.2cm} 
  &=& \PN^{x_0}\Big(  \PN^{X_{t}} ( \ h(X_{1-(t+\varepsilon) })  )  \ \mathds{1}_{\{\tau_t(X) \in L_{\varepsilon}^{\cy} \} }  \ |X_t \Big) \\
  \vspace{0.2cm}
   & =& \PN^{x_0}\Big(   \mathds{1}_{\{\tau_t(X) \in L_{\varepsilon}^{\cy} \} } |X_t \Big) \  \PN^{X_{t}} \Big(h(X_{1-(t+\varepsilon) }) \Big).
\end{eqnarray*}
Applying \eqref{deccondexp} and Proposition \ref{fwdloop}  and the continuity  of $$( \om,t,.) \mapsto \PN^{X_t}( h(X_{1-.})) $$ we obtain: 
\begin{equation}\label{implicitfinal}
 \frac{1}{\sinv \ |\cv|!}  \ \PN^{X_{t}}(h(X_{1-t })  ) = \lim_{\varepsilon \rightarrow 0 } \varepsilon^{-|\cv|} \, \QR ( \mathds{1}_{ \{ \tau_t \in L_{\varepsilon}^{\cy} \} } |X_{t} )   \ \PN^{X_t}( h(X_{1-t}) )
\end{equation}
We observe  that $ \PN^{X_t}( h(X_{1-t}) ) = d{\QR}_t/ d(\PN^{x_0})_{t}  $ and therefore it is strictly positive $\QR \ps$ .
This allows us to divide on both sides by $\PN^{X_t}( h(X_{1-t}) )$ and the conclusion follows.
\end{proof}

We have thus shown that each element of the reciprocal class has the same probability to {\em spin around} its current state in a very short time interval.
\begin{remark}
 In the statement of Proposition \ref{loopint} we could have replaced $X_t$ with $\mathcal{F}_t$, i.e. the following asymptotics holds true:
 \begin{equation*}
 \QR( \tau_{t}(X) \in L_{\varepsilon}^{\cy}  | \mathcal{F}_{t} ) =  
  \frac{1}{\sinv \ |\cv|!} \varepsilon^{|\cv|} + o(\varepsilon^{|\cv|}) \quad  as \  \varepsilon \rightarrow 0 .
 \end{equation*}
 \end{remark}
\vspace{0.7cm}
\section{Appendix}
\begin{proof} $\textit{(Step 1 in Theorem \ref{thm:char})}$\\
 We first observe that it is sufficient to prove that  
$$
\QR(.|\NO=\nv) << \PN(.|\NO =\nv) \, \textrm{ for all } \nv \textrm{ such that }\QR(\NO =\nv)>0.
$$
 To this aim, we use an approximation argument.

Let us fix $\nv$ and construct a discrete (dyadic) approximation of the jump times. 
For $m \geq  \max_{j=1,...,A} \log_{2} (n^j)+1:=\bar{m}$ , $\mathcal{D}^m$ is composed by $A$ ordered sequences of dyadic numbers, the j-th sequence having length $n^j$:
\begin{equation*}
\mathcal{D}^{m}:= \left\{ \K = (k^j_i)_{j \leq A,  i \leq n^j} : \,   k^j_i \in 2^{-m} \N  , 0 < k^{j} _{i-1} < k^{j}_{i} \leq 1, \, \forall j \leq A, \forall i \leq  n^j  \right\}
\end{equation*} 
For $\K \in \mathcal{D}^m$ we define the subset of trajectories whose jump times are localized around $\K$:
\begin{equation}
O^{m}_{\K} = \left\{  \NO = \nv \right\} \cap \bigcap_{\stackrel{j \leq A }{ i \leq n^j}} \left\{ 0 \leq k^j_i - T^j_i  < 2^{-m} \right\}
\end{equation}
Moreover, as a final preparatory step, we observe for every $ m \geq \bar{m} $, $\K,\K' \in \mathcal{D}^{m}$ 
one can easily construct $ u \in \U $ such that:
\begin{equation}\label{umk}
u(j,t) =  t+k'^j_i-k^j_i , \quad \forall j \leq A, i \leq n^j \ and \  t \ s.t. \ 0 \leq  k^j_i - t < 2^{-m}
\end{equation}
We can observe that \eqref{umk} ensures $\dot{u}(j,T^j_i) =1 $ over $O^{m}_{\K}$, and that $O_{\K'}^m = \pi_u^{-1}(O^{m}_{\K})$.
We choose $F= \mathds{1}_{ O^{\nv}_{\K'} } \mathds{1}_{\{\NO=\nv \}}/\QR(\NO = \nv)$ and $u$ as in \eqref{umk} and apply \eqref{eq:udns} to obtain
:
\begin{equation*}
\begin{split}
\QR \Big(  O^{m}_{\K'}  | & \NO = \nv \Big) = \QR \Big(  \left\{ \om : \pi_{u}(\om) \in O^{m}_{\K}\right\} | \NO=\nv  \Big) \\ 
&= \QR \Big( \mathds{1}_{O^{m}_{\K}} \GT   \Big| \NO = \nv \Big) \\
&\geq   C \ \QR \Big( O^{m}_{\K}| \NO = \nv \Big),
\end{split}
\end{equation*}
where
\begin{equation}
C:= \Big( \inf_{s,t \in [0,1], j \leq A } \tinv(j,s,t) \Big)^{\sum_{j} \nv_j} >0
\end{equation}
since $\nu \in \smch$.
With a simple covering argument we obtain, for all $ m \geq \bar{m}$ and  $ \K \in \mathcal{D}^m $,
\begin{multline*}
\sharp \mathcal{D}^m \min \{ 1,\frac{1}{C} \} \QR \left(  O^{m}_{\K}  | \NO=\nv\right) \\  \leq  \QR \left( O^{m}_{\K} | \NO=\nv \right) + \sum_{\stackrel{\K' \in \mathcal{D}^m}{\K' \neq  \K}}  \QR \left(  O^{m}_{\K'} |\mathbf{\NO} = \nv \right)  \leq 1 .
\end{multline*}
It can be shown with a direct computation that $\frac{1}{|\mathcal{D}^m|} \leq C' \PN ( O^m_{\K} | \mathbf{\NO}=\nv)$ for some $C'>0$ uniformly in $m, \K \in \mathcal{D}^m$ (the proof is given in Lemma \ref{lem:count}). Therefore there exists a constant $C^{''}>0$ such that:
\begin{equation*}\label{abscont}
\QR(O^{m}_{\K}|\NO=\nv) \leq C^{''} \ \PN (O^{m}_{\K}|\NO=\nv),\quad \forall  m \geq \bar{m} , \K \in \mathcal{D}^{m} .
\end{equation*}
With a standard approximation argument, using the fact that $\QR(\Omega) =1$, we can extend the last bound to any measurable set. This completes the proof of the absolute continuity.  \\
\end{proof}
\begin{lemma}\label{lem:count}
Let $\mathcal{D}^m$ and $\PN$ as before. Then there exists a constant $C^{'}$ such that for $m$ large enough, 
\begin{equation*}
C^{'} \ \PN(O^{m}_{\mathbf{k}}|\NO= \nv ) \geq \frac{1}{\sharp \mathcal{D}^{m}}
\end{equation*}
\end{lemma}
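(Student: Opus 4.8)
The plan is to compute $\PN(O^m_\K\mid\NO=\nv)$ essentially explicitly and then to match it against the elementary count $\sharp\mathcal{D}^m$. First I would recall the conditional law of the jump times under the reference process: conditionally on $\{\NO=\nv\}$, the families $(T^j_i)_{i\le n^j}$ are independent across $j$, and for each fixed $j$ the ordered times $T^j_1<\cdots<T^j_{n^j}$ are distributed as the order statistics of $n^j$ i.i.d. variables with density $\nu^j(\cdot)/\lambda^j$ on $[0,1]$, where $\lambda^j=\int_0^1\nu^j(t)\,dt$ as in \eqref{ident}. This is the standard description of the inhomogeneous Poisson process and, being insensitive to the starting point $X_0$, it holds under $\PN=\PN^{\QR}$ exactly as under $\PNX$.

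Next, since $\K\in\mathcal{D}^m$ has strictly increasing coordinates with consecutive gaps at least $2^{-m}$, the boxes $(k^j_i-2^{-m},k^j_i]$ are pairwise disjoint, automatically ordered, and contained in $[0,1]$. Integrating the order-statistics density over $\prod_{j,i}(k^j_i-2^{-m},k^j_i]$ therefore produces the product formula
\begin{equation*}
\PN(O^m_\K\mid\NO=\nv)=\prod_{j=1}^A n^j!\prod_{i=1}^{n^j}\frac{1}{\lambda^j}\int_{k^j_i-2^{-m}}^{k^j_i}\nu^j(t)\,dt .
\end{equation*}
Here I would invoke that each $\nu^j$ is continuous and strictly positive on $[0,1]$ (the same positivity that makes the time-invariant $\tinv$ well defined and the constant $C$ of Step 1 strictly positive); setting $m_j:=\min_{t\in[0,1]}\nu^j(t)>0$, each integral is at least $m_j\,2^{-m}$, so that
\begin{equation*}
\PN(O^m_\K\mid\NO=\nv)\ \ge\ 2^{-m\sum_j n^j}\prod_{j=1}^A n^j!\,\Bigl(\frac{m_j}{\lambda^j}\Bigr)^{n^j},
\end{equation*}
a bound uniform in $\K$.

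The last step is the combinatorics. Choosing $\K$ amounts to picking, for each $j$, an increasing sequence of $n^j$ points among the $2^m$ admissible dyadics in $(0,1]$, whence $\sharp\mathcal{D}^m=\prod_{j=1}^A\binom{2^m}{n^j}$. Multiplying the two estimates gives
\begin{equation*}
\sharp\mathcal{D}^m\cdot\PN(O^m_\K\mid\NO=\nv)\ \ge\ \Bigl(\prod_{j=1}^A\binom{2^m}{n^j}2^{-mn^j}n^j!\Bigr)\prod_{j=1}^A\Bigl(\frac{m_j}{\lambda^j}\Bigr)^{n^j},
\end{equation*}
and the first factor equals $\prod_{j=1}^A\prod_{i=0}^{n^j-1}(1-i\,2^{-m})$, which converges to $1$ as $m\to\infty$ and is therefore bounded below by $\tfrac12$ for all large $m$, uniformly in $\K$. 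Hence $\sharp\mathcal{D}^m\,\PN(O^m_\K\mid\NO=\nv)\ge\tfrac12\prod_j(m_j/\lambda^j)^{n^j}$ for all such $m$ and all $\K$, and the lemma follows with $C':=2\prod_j(\lambda^j/m_j)^{n^j}$.

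The only genuine subtlety is the strict positivity of the $\nu^j$: were some $\nu^j$ to vanish on a dyadic box, the corresponding integral — and thus $\PN(O^m_\K\mid\NO=\nv)$ — would be zero for that $\K$ while $1/\sharp\mathcal{D}^m>0$, breaking the inequality. This is precisely the standing hypothesis under which the construction operates (the one yielding $C>0$ in Step 1), so I would state it explicitly. Everything else is bookkeeping: the order-statistics identity, the disjointness of the boxes, and the elementary limit $\binom{2^m}{n^j}2^{-mn^j}n^j!\to1$, all uniform in $\K$.
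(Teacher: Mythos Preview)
Your proof is correct and follows essentially the same route as the paper's: both compute $\sharp\mathcal{D}^m=\prod_j\binom{2^m}{n^j}$, obtain a lower bound for $\PN(O^m_\K\mid\NO=\nv)$ of the form $c\prod_j 2^{-mn^j}n^j!$, and conclude via the elementary estimate $\binom{2^m}{n^j}2^{-mn^j}n^j!\ge C_0$. The only difference is cosmetic: the paper first replaces $\PN$ by the homogeneous process $\PR_{\tilde\nu}$ (equivalent to $\PN$ precisely because each $\nu^j$ is bounded and bounded away from zero) and then computes the conditional probability exactly, whereas you stay with $\PN$, use the order-statistics representation, and bound each integral $\int_{k^j_i-2^{-m}}^{k^j_i}\nu^j$ below by $m_j\,2^{-m}$; both manoeuvres rest on the same strict positivity hypothesis, which you rightly flag as the one nontrivial input.
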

\begin{proof} 
We want to prove that,for $\nv \in \N^A $ :\begin{equation}\label{comb}
 \frac{1}{\sharp \mathcal{D}^m} \leq C' \PN(O^{m}_{\K}| \NO =\nv), \quad \forall \ m \geq \max_{j\leq A} \log(n^j)+1 , \, \K \in \mathcal{D}^m
\end{equation}
We can first compute explicitly $\sharp \mathcal{D}^m$ with a simple combinatorial argument: 
each $\K \in \mathcal{D}^m$ is constructed by choosing  $n^j$ dyadic intervals, $ j \leq A$, and ordering them. Therefore
\begin{equation}
\sharp \mathcal{D}^m = \prod_{j=1}^{A} \binom{2^{m}}{n^j}.
\end{equation}
On the other hand, we observe that defining $\tilde{\nu}(dxdt) =\sum_{j=1}^{A}\delta_{a^j}(dx) \otimes dt$, 
 $\PR_{\tilde{\nu}}$ is equivalent to $\PN $,  and therefore, we can prove \eqref{comb} replacing $\PN$ with $\PR_{\tilde{\nu}}$.
To do this, for each $\K \in \mathcal{D}^m$ we define the function:
\begin{eqnarray*}
\delta :  \{ 1,..., 2^{m}  \} \times  \{1,...,A \}  \longrightarrow \{ 0,1 \} \\
 \delta ( i,j) := \begin{cases} 1, & \quad \mbox{if $ i \in \{ 2^{m}\K^j_1,...,2^{m} \K^j_{n^j} \} $} \\ 0 , & \quad \mbox{otherwise .}\end{cases}
\end{eqnarray*}
 Then, using the explicit distribution of $\PR_{\tilde{\nu}}$,
\begin{eqnarray*}
 &&\PR_{\tilde{\nu}}(O^m_{\K} | \NO = \nv ) \\ 
 &&= \PR_{\tilde{\nu}} \Big( \bigcap_{(i,j) \in \{1,..,2^m \} \times  \{1,..,A\}} \{N^j_{\frac{i}{2^m}} -
N^j_{\frac{i}{2^m}} = \delta(i,j) \} | \NO =\nv \Big)  \\
&& = \exp(A)\exp(-2^{-m})^{2^{m}A} (2^{-m})^{(\sum_j n^j )}\prod_{j=1}^A n^j! =  \prod_{j=1}^A  2^{-mn^j}  n^j!  
\end{eqnarray*}
It is now easy to see that there exists a constant $C_0>0$ such that:
\begin{equation*}
 \binom{2^m}{n^j} \geq C_0 \frac{2^{m n^j}}{ n^j!} ,  \quad \forall \ j \leq A , \  m \geq \max_{j=1,...,A} \log(n^j)+1, \ \K \in \mathcal{D}^m 
\end{equation*}
from which the conclusion follows.
\end{proof}


\begin{proof} (\textit{ of Proposition} $\ref{posray}$)

\begin{itemize}
\item[i)]
 Let $\nv \in \N^A, \mathbf{m} \in \LF$. Since $\CST$ is a lattice basis there exists $\cv_1,...,\cv_K \subseteq (\CST \cup -\CST )^K $ such that, if we define recursively 
\begin{equation*}
 w_0 = \nv, \quad w_k = \theta_{\cv_k} w_{k-1}
\end{equation*}
then we have that $w_K = \mv $.  Let us consider $l$ large enough such that 
\begin{equation}\label{trans}
l  \ \min_{j=1,...,A} \bar{c}^j \geq |\min_{\stackrel{j=1...,A}{k=1,...,K}}w^j_k| .
\end{equation}
We then consider the sequence $w'_k$, $k = 0,...,K+2l$ defined as follows:
\begin{equation*}
 w'_k= \begin{cases} \theta_{\bar{\cv}}w'_{k-1}, \quad & \mbox{if $1 \leq k \leq l$}\\ \theta_{\cv_{k-l}} w'_{k-1}, \quad & \mbox{if $l+1 \leq k \leq K+l$}  \\ \theta_{-\bar{\cv}} w'_{k-1} \quad & \mbox{if $K+l+1 \leq k \leq K+2l$} .\end{cases}  
\end{equation*}
It is now easy to check, thanks to condition \eqref{trans}  that  
\begin{equation*}
 \ w'_k \in \LF \quad \forall \  k \leq K+2l  .
 \end{equation*}
 
Since all the shifts involved in the definition of $w'_k$ are associated to vectors in $\CST \cup - \CST$ we also have that
$ w'_k \in \LF $ and $(w'_{k-1},w'_k)$   is an edge of   $\mathcal{G}(\LF,\CST), k \leq K+2l$.\\
  Moreover we can check that 
  \begin{equation*}
  w'_{K+2l} = \nv + l \bar{c} + \sum_{k \leq K} c_k - l \bar{c} = \mathbf{m} 
  \end{equation*}
   Therefore $\nv$ and $\mathbf{m}$ are connected in $\mathcal{G}(\LF,\CST)$ and the conclusion follows since the choice of $\mathbf{m}$ is arbitrarily in $\LF$ and $\nv$ any point in $\N^A$.  
\item[ii)]  Let $\nv \in \N^A, \mathbf{m} \in \LF$. Since $\CST$ is a lattice basis there exists $K < \infty$ and $\cv_1,...,\cv_K \subseteq (\CST \cup - \CST )^K $ such that if we define recursively :
\begin{equation}
 w_0 = \nv, \quad w_k = \theta_{\cv_k} w_{k-1}
\end{equation}
then we have that $w_K = \mv $
\end{itemize}
Observe that w.l.o.g there exists $K^+$ s.t. $\cv_k \in \CST$ for all $ k \leq K^+ \ $ and 
$\cv_k \in \ -\CST  \ ,k \in \{ K^+ +1,...,A\} $. Applying the hypothesis one can check directly that $\{ w_{k} \}_{0 \leq k \leq K}$ is a path which connects $\nv$ to $\mv$ in $\mathcal{G}(\LF,\CST) $.
\end{proof}

\noindent
{\large \bf References}

\bibliographystyle{plain}
\bibliography{Ref}

\begin{thebibliography}{10}

\bibitem{Bern32}
S.~Bernstein.
\newblock Sur les liaisons entre les grandeurs al\'eatoires.
\newblock In {\em Vehr. des intern. Mathematikerkongress Z\"urich}, volume~I.
  1932.

\bibitem{carlen88}
{E.A.} Carlen and {E.} Pardoux.
\newblock Differential calculus and integration by parts on {P}oisson space.
\newblock In S.~Albeverio, Ph. Blanchard, and D.~Testard, editors, {\em
  Stochastics, Algebra and Analysis in Classical and Quantum Dynamics},
  volume~59 of {\em Mathematics and Its Applications}, pages 63--73. Springer,
  1990.

\bibitem{CMT92}
J.P. {Carmichael}, J.C. {Masse}, and R.~{Theodorescu}.
\newblock {Processus gaussiens stationnaires r\'eciproques sur un intervalle.}
\newblock {\em {C. R. Acad. Sci., Paris, S\'er. I}}, 295:291--293, 1982.

\bibitem{Cha72}
S.C. Chay.
\newblock On quasi-{M}arkov random fields.
\newblock {\em Journal of Multivariate Analysis}, 2(1):14--76, 1972.

\bibitem{Chen}
{L.H.Y.} Chen.
\newblock Poisson approximation for dependent trials.
\newblock {\em The Annals of Probability}, 3(3):534--545, 1975.

\bibitem{Cl91}
{J.M.C.} Clark.
\newblock A local characterization of reciprocal diffusions.
\newblock {\em Applied Stochastic Analysis}, 5:45--59, 1991.

\bibitem{LMR}
G.~Conforti, R.~Murr, C.~L\'eonard, and S.~R{\oe}lly.
\newblock Bridges of {M}arkov counting processes. {R}eciprocal classes and
  duality formulae.
\newblock Preprint, available at
  http://users.math.uni-potsdam.de/$\sim$roelly/, 2014.

\bibitem{Cre93}
{N.A.C.} Cressie.
\newblock {\em {Statistics for Spatial Data}}.
\newblock Wiley, 1993.

\bibitem{CruZa91}
A.B. Cruzeiro and J.C. Zambrini.
\newblock Malliavin calculus and {E}uclidean quantum mechanics. {I}.
  {F}unctional calculus.
\newblock {\em Journal of Functional Analysis}, 96(1):62--95, 1991.

\bibitem{DP91}
P.~{Dai Pra}.
\newblock A stochastic control approach to reciprocal diffusion processes.
\newblock {\em Applied Mathematics and Optimization}, 23(1):313--329, 1991.

\bibitem{Koppe13}
J.A. De~Loera, R.~Hemmecke, and M.~K{\"o}ppe.
\newblock {\em Algebraic and Geometric Ideas in the Theory of Discrete
  Optimization}.
\newblock MOS-SIAM Series on Optimization. SIAM, 2013.

\bibitem{JaShi03}
J.~Jacod and A.N. Shiryaev.
\newblock {\em Limit theorems for stochastic processes}.
\newblock Grundlehren der mathematischen Wissenschaften. Springer, 2003.

\bibitem{Jam70}
B.~Jamison.
\newblock Reciprocal processes: The stationary {G}aussian case.
\newblock {\em The Annals of Mathematical Statistics}, 41(5):1624--1630, 1970.

\bibitem{Jam74}
B.~Jamison.
\newblock Reciprocal processes.
\newblock {\em Zeitschrift für Wahrscheinlichkeitstheorie und Verwandte
  Gebiete}, 30(1):65--86, 1974.

\bibitem{Jam75}
{B.} Jamison.
\newblock The {M}arkov processes of {S}chrödinger.
\newblock {\em Zeitschrift für Wahrscheinlichkeitstheorie und Verwandte
  Gebiete}, 32(4):323--331, 1975.

\bibitem{Kre88}
{A. J.} Krener.
\newblock Reciprocal diffusions and stochastic differential equations of second
  order.
\newblock {\em Stochastics}, 107(4):393--422, 1988.

\bibitem{Kre97}
{A.J.} Krener.
\newblock Reciprocal diffusions in flat space.
\newblock {\em Probability Theory and Related Fields}, 107(2):243--281, 1997.

\bibitem{LRZ}
C.~L{\'e}onard, S.~R{\oe}lly, and J.C. Zambrini.
\newblock Temporal symmetry of some classes of stochastic processes.
\newblock 2013.
\newblock Preprint, available at
  http://users.math.uni-potsdam.de/$\sim$roelly/.

\bibitem{Lev97}
{B.C.} Levy.
\newblock Characterization of multivariate stationary {G}aussian reciprocal
  diffusions.
\newblock {\em Journal of Multivariate Analysis}, 62(1):74 -- 99, 1997.

\bibitem{LeKre93}
{B.C.} Levy and {A.J.} Krener.
\newblock Dynamics and kinematics of reciprocal diffusions.
\newblock {\em Journal of Mathematical Physics}, 34(5), 1993.

\bibitem{Mec67}
J.~Mecke.
\newblock Stationäre zufällige {M}aße auf lokalkompakten {A}belschen
  {G}ruppen.
\newblock {\em Zeitschrift für Wahrscheinlichkeitstheorie und Verwandte
  Gebiete}, 9(1):36--58, 1967.

\bibitem{Murr12}
R.~Murr.
\newblock {\em Reciprocal classes of {M}arkov processes. {A}n approach with
  duality formulae.}
\newblock PhD thesis, Universit{\"a}t Potsdam, 2012.
\newblock Available at opus.kobv.de/ubp/volltexte/2012/6301/pdf/premath26.pdf.

\bibitem{NK}
J.~Neukirch.
\newblock {\em Algebraic Number Theory}.
\newblock Grundlehren der mathematischen Wissenschaften : a series of
  comprehensive studies in mathematics. Springer, 1999.

\bibitem{PrZamb04}
J.C. Privault, N.~Zambrini.
\newblock Markovian bridges and reversible diffusion processes with jumps.
\newblock {\em Annales de l'{I}nstitut {H}enri {P}oincar{\'e} (B),
  {P}robababilit\'es et Statistiques}, 40(5):599--633, 2004.

\bibitem{Roell13}
S.~R{\oe}lly.
\newblock Reciprocal processes. {A} stochastic analysis approach.
\newblock In V.~Korolyuk, N.~Limnios, Y.~Mishura, L.~Sakhno, and G.~Shevchenko,
  editors, {\em Modern {S}tochastics and {A}pplications}, volume~90 of {\em
  Optimization and Its Applications}, pages 53--67. Springer, 2014.

\bibitem{RT02}
S.~R{\oe}lly and M.~Thieullen.
\newblock A characterization of reciprocal processes via an integration by
  parts formula on the path space.
\newblock {\em Probability Theory and Related Fields}, 123(1):97--120, 2002.

\bibitem{RT05}
S.~R{\oe}lly and M.~Thieullen.
\newblock Duality formula for the bridges of a brownian diffusion: Application
  to gradient drifts.
\newblock {\em Stochastic Processes and their Applications},
  115(10):1677--1700, 2005.

\bibitem{Schr}
E.~Schr{\"o}dinger.
\newblock {\"U}ber die {U}mkehrung der naturgesetze.
\newblock {\em Sitzungsberichte Preuss. Akad. Wiss. Berlin. Phys. Math.}, 144,
  1931.

\bibitem{Sli62}
I.~M. Slivnjak.
\newblock Some properties of stationary streams of homogeneous random events.
\newblock {\em Teor. Verojatnost. i Primenen.}, 7:347--352, 1962.
\newblock In Russian.

\bibitem{Ste86}
C.~Stein.
\newblock {\em Approximate Computation of Expectations}.
\newblock IMS Lecture Notes. Institute of Mathematical Statistics, 1986.

\bibitem{Th93}
M.~Thieullen.
\newblock Second order stochastic differential equations and non-{G}aussian
  reciprocal diffusions.
\newblock {\em Probability Theory and Related Fields}, 97(1-2):231--257, 1993.

\bibitem{TZ97}
M.~Thieullen and J.~C. Zambrini.
\newblock Symmetries in the stochastic calculus of variations.
\newblock {\em Probability Theory and Related Fields}, 107(3):401--427, 1997.

\bibitem{Wak89}
A.~Wakolbinger.
\newblock A simplified variational characterization of {S}chrödinger
  processes.
\newblock {\em Journal of Mathematical Physics}, 30(12):2943--2946, 1989.

\bibitem{Zam86}
J.C. Zambrini.
\newblock Variational processes and stochastic versions of mechanics.
\newblock {\em Journal of Mathematical Physics}, 27(9):2307--2330, 1986.

\end{thebibliography}
\end{document}